\newtheorem{definition}{Definition}
\definecolor{mycolor}{RGB}{255, 240, 210} 
\theoremstyle{plain}
\newtheorem{theorem}{Theorem}
\newtheorem{lemma}[theorem]{Lemma}
\newtheorem{corollary}{Corollary}[theorem]
\newtheorem{proposition}[theorem]{Proposition}
\newtheorem*{no-lemma}{Lemma}
\theoremstyle{remark}
\newtheorem{remark}[theorem]{Remark}
\newtheorem{example}[theorem]{Example}
\newlength{\RoundedBoxWidth}
\newsavebox{\GrayRoundedBox}
\newenvironment{GrayBox}[1][\dimexpr\textwidth-4.5ex]%
   {\setlength{\RoundedBoxWidth}{\dimexpr#1}
    \begin{lrbox}{\GrayRoundedBox}
       \begin{minipage}{\RoundedBoxWidth}}%
   {   \end{minipage}
    \end{lrbox}
    \begin{center}
    \begin{tikzpicture}%
       \draw node[draw=black,fill=black!10,rounded corners,%
             inner sep=2ex,text width=\RoundedBoxWidth]%
             {\usebox{\GrayRoundedBox}};
    \end{tikzpicture}
    \end{center}}
\newcommand{\ru}[1]{\rule{0pt}{#1 em}}
\newcommand{\scal}{\cdot}
\newcommand*\bigcdot{\mathpalette\bigcdot@{.5}}
\newcommand*\bigcdot@[2]{\mathbin{\vcenter{\hbox{\scalebox{#2}{$\m@th#1\bullet$}}}}}
\def\XXint#1#2#3{{\setbox0=\hbox{$#1{#2#3}{\int}$}
     \vcenter{\hbox{$#2#3$}}\kern-.5\wd0}}
\newcommand{\sgn}{\operatorname{sgn}}
\newcommand{\Exx}{\mathbb{E}\,}
\newcommand{\doublewidetilde}[1]{{%
  \mathpalette\double@widetilde{#1}%
}}
\newcommand{\double@widetilde}[2]{%
  \sbox\z@{$\m@th#1\widetilde{#2}$}%
  \ht\z@=0.8\ht\z@
  \widetilde{\kern-0.2ex\box\z@}
}
\newcommand{\stkout}[1]{\ifmmode\text{\sout{\ensuremath{#1}}}\else\sout{#1}\fi}
\def\dout{\bgroup
 \markoverwith{\lower-0.2ex\hbox
 {\kern-.03em\vbox{\hrule width.2em\kern0.45ex\hrule}\kern-.03em}}%
 \ULon}
\tikzset{
    vertex/.style={minimum size=1.5em},
    edge/.style={->,> = latex'},
    pair/.style={{Circle[length=4pt]}-{Circle[length=4pt]},
      shorten >=-2.5pt,
      shorten <=-2.5pt}
    }
\tikzset{
    vertex_gray/.style={minimum size=1.5em, fill=gray},
    edge_gray/.style={->, > = latex', draw=gray},
    pair_gray/.style={
      {Circle[length=4pt]}-{Circle[length=4pt]},
      shorten >=-3.5pt,
      shorten <=-3.5pt,
      draw=gray
    }
      }
\def\ssscal#1{
  \mathbin{\vcenter{\baselineskip.67ex
    \hbox{$\scal$}\hbox{$\scal$}\hbox{$\scal$}%
  }}%
}
\newcommand{\pairA}[1][1]{
        \node[vertex] (A1) at (1*#1,1) {};
        \node[vertex] (A2) at (1*#1,2) {};
        \node[vertex] (A3) at (1*#1,3) {};
        \node[vertex] (A4) at (1*#1,4) {};
        \node[vertex] (A5) at (1*#1,5) {};
        \node[vertex] (A6) at (1*#1,6) {};
}
\newcommand{\pairAB}[1][1]{
        \pairA[#1];
        \node[vertex] (B1) at (2*#1,1) {};
        \node[vertex] (B2) at (2*#1,2) {};
        \node[vertex] (B3) at (2*#1,3) {};
        \node[vertex] (B4) at (2*#1,4) {};
        \node[vertex] (B5) at (2*#1,5) {};
        \node[vertex] (B6) at (2*#1,6) {}
}
\newcommand{\pairABC}[1][1]{
        \pairAB[#1];
        \node[vertex] (C1) at (3*#1,1) {};
        \node[vertex] (C2) at (3*#1,2) {};
        \node[vertex] (C3) at (3*#1,3) {};
        \node[vertex] (C4) at (3*#1,4) {};
        \node[vertex] (C5) at (3*#1,5) {};
        \node[vertex] (C6) at (3*#1,6) {}
}
\newcommand{\pairABCD}[1][1]{
        \pairABC[#1];
        \node[vertex] (D1) at (4*#1,1) {};
        \node[vertex] (D2) at (4*#1,2) {};
        \node[vertex] (D3) at (4*#1,3) {};
        \node[vertex] (D4) at (4*#1,4) {};
        \node[vertex] (D5) at (4*#1,5) {};
        \node[vertex] (D6) at (4*#1,6) {};
}
\newcommand{\pairABCDE}[1][1]{
        \pairABCD[#1];
        \node[vertex] (E1) at (5*#1,1) {};
        \node[vertex] (E2) at (5*#1,2) {};
        \node[vertex] (E3) at (5*#1,3) {};
        \node[vertex] (E4) at (5*#1,4) {};
        \node[vertex] (E5) at (5*#1,5) {};
        \node[vertex] (E6) at (5*#1,6) {};
}
\newcommand{\pairABCDEF}[1][1]{
        \pairABCDE[#1];
        \node[vertex] (F1) at (6*#1,1) {};
        \node[vertex] (F2) at (6*#1,2) {};
        \node[vertex] (F3) at (6*#1,3) {};
        \node[vertex] (F4) at (6*#1,4) {};
        \node[vertex] (F5) at (6*#1,5) {};
        \node[vertex] (F6) at (6*#1,6) {};
}
\newcommand{\pairABCDEFGHI}[1][1]{
        \pairABCDEF[#1];
        \node[vertex] (G1) at (7*#1,1) {};
        \node[vertex] (G2) at (7*#1,2) {};
        \node[vertex] (G3) at (7*#1,3) {};
        \node[vertex] (G4) at (7*#1,4) {};
        \node[vertex] (G5) at (7*#1,5) {};
        \node[vertex] (G6) at (7*#1,6) {};
        \node[vertex] (H1) at (8*#1,1) {};
        \node[vertex] (H2) at (8*#1,2) {};
        \node[vertex] (H3) at (8*#1,3) {};
        \node[vertex] (H4) at (8*#1,4) {};
        \node[vertex] (H5) at (8*#1,5) {};
        \node[vertex] (H6) at (8*#1,6) {};
        \node[vertex] (I1) at (9*#1,1) {};
        \node[vertex] (I2) at (9*#1,2) {};
        \node[vertex] (I3) at (9*#1,3) {};
        \node[vertex] (I4) at (9*#1,4) {};
        \node[vertex] (I5) at (9*#1,5) {};
        \node[vertex] (I6) at (9*#1,6) {}
}
\newcommand{\oneincirc}{
\begin{tikzpicture}[baseline=-0.62ex, thick, main/.style = {draw,circle, inner sep = 2pt}, scale = 0.7]
    \node[main] (1) at (0,0) {$1$};
    \end{tikzpicture}
}
\newcommand{\fourcolab}{\begin{tikzpicture}[baseline = 5.1ex,scale = 0.37]
        \draw[fill=mycolor] (1.3-0.6, 0.5) rectangle (1*1.3+0.6, 5.5);
        \node[vertex] (A1) at (1*1.3,1) {};
        \node[vertex] (A2) at (1*1.3,1.7) {};
        \draw[fill=white] (1*1.3-0.35, 2.3) rectangle (1*1.3+0.35, 5.2);
        \draw[pair] (A1.center) to (A2.center);
        \end{tikzpicture}}
\newcommand{\fourcolabII}{\begin{tikzpicture}[baseline = 5.1ex,scale = 0.37]
        \draw[fill=mycolor] (1.3-0.6, 0.5) rectangle (1*1.3+0.6, 5.5);
        \node[vertex] (A1) at (1*1.3,2.7) {};
        \node[vertex] (A2) at (1*1.3,3.3) {};
        \draw[fill=white] (1*1.3-0.35, 3.9) rectangle (1*1.3+0.35, 5.2);
        \draw[fill=white] (1*1.3-0.35, 0.8) rectangle (1*1.3+0.35, 2.1);
        \draw[pair] (A1.center) to (A2.center);
        \end{tikzpicture}}
\newcommand{\fourcolabIII}{\begin{tikzpicture}[baseline = 5.1ex,scale = 0.37]
        \draw[fill=mycolor] (1.3-0.6, 0.5) rectangle (1*1.3+0.6, 5.5);
        \node[vertex] (A1) at (1*1.3,4.3) {};
        \node[vertex] (A2) at (1*1.3,5) {};
        \draw[fill=white] (1*1.3-0.35, 0.8) rectangle (1*1.3+0.35, 3.7);
        \draw[pair] (A1.center) to (A2.center);
        \end{tikzpicture}}
\title{Analysing the Moments of the Determinant of a Random Matrix Via Analytic Combinatorics of Permutation Tables}
\author{Dominik Beck\thanks{\href{mailto:beckd@karlin.mff.cuni.cz}{beckd@karlin.mff.cuni.cz}}}
\affil{\small Faculty of Mathematics and Physics, Charles University, Prague}
\author{Zelin Lv\thanks{\href{mailto:zlv@uchicago.edu}{zlv@uchicago.edu}}}
\author{Aaron Potechin\thanks{\href{mailto:potechin@uchicago.edu}{potechin@uchicago.edu}}}
\affil{\small The University of Chicago}
\begin{document}

\maketitle

\begin{abstract}
We consider the following natural question. Given a matrix $A$ with i.i.d. random entries, what are the moments of the determinant of $A$? In other words, what is $\mathbb{E}[\det(A)^k]$? While there is a general expression for $\mathbb{E}[\det(A)^k]$ when the entries of $A$ are Gaussian, much less is known when the entries of $A$ have some other distribution.

In two recent papers, we answered this question for $k = 4$ when the entries of $A$ are drawn from an arbitrary distribution and for $k = 6$ when the entries of $A$ are drawn from a distribution which has mean $0$. These analyses used recurrence relations and were highly intricate. In this paper, we show how these analyses can be simplified considerably by using analytic combinatorics on permutation tables.


\vspace{1em}
\textbf{Keywords:} random determinants, permutation tables, generating functions, analytic combinatorics

\vspace{1em}
\begin{center}
    \textbf{Acknowledgement}
\end{center}

Dominik Beck was supported by the Charles University, project GA UK No. 71224 and by Charles University Research Centre program No. UNCE/24/SCI/022. Zelin Lv and Aaron Potechin were supported by NSF grant CCF-2008920.

\end{abstract}

\section{Introduction}
The determinant of a matrix is a fundamental quantity which is ubiquitous in linear algebra. A natural question about the determinant which is only partially understood is to determine the moments of the determinant of a matrix $A$ with i.i.d. random entries. In other words, given $k \in \mathbb{N}$, what is $ \mathbb{E}[\det(A)^k]$?

When the entries of $A$ are drawn from the normal distribution $\mathsf{N}(0,1)$, there is a general formula for $\mathbb{E}[\det(A)^k]$. In particular, Nyquist, Rice, and Riordan \cite{nyquist1954distribution} and later Pr\'{e}kopa \cite{prekopa1967random} independently showed that when $k$ is $even$, $\mathbb{E}[\det(A)^k] = \prod_{j=0}^{\frac{k}{2} - 1}{\frac{(n+2j)!}{(2j)!}}$.

That said, only a few results were previously known about $\mathbb{E}[\det(A)^k]$ when the entries of $A$ are drawn from a distribution which is not Gaussian.
\subsection{Prior work for non-Gaussian entries and our results}
In order to describe prior work and our results, we need a few definitions.

\begin{definition}
Given an $n \times n$ matrix $A$ with i.i.d. entries drawn from some distribution $\Omega$, we make the following definitions:
\begin{enumerate}
\item We define $m_r = \mathbb{E}[A_{ij}^r]$ to be the $r$th moment of the entries of $A$. When $m_1 \neq 0$, we define $B_{ij}=A_{ij}-m_1$ and we define $\mu_r = \mathbb{E}[B_{ij}^r]$ to be the $r$th centralized moment of the entries of $A$.
\item We define $f_k(n) = \mathbb{E}[\det(A)^k]$ to be the $k$th moment of the determinant.
\item We define $F_k(t) = \sum_{n=0}^{\infty}{\frac{t^n}{n!^2}f_k(n)}$ to be the generating function for the determinant.
\end{enumerate}
\end{definition}
\begin{example}\label{Ex:f42}
When $n=2$ and $k=4$, we have 
\begin{align*}
f_4(2) &= \Exx \left[\det(A)^4\right] = \Exx \left[(A_{11}A_{22}-A_{12}A_{21})^4\right] \\
&= \Exx \left[{A_{11}^4}A_{22}^4 \,-\,4{A_{11}^3}A_{22}^3{A_{12}}A_{21} \,+\, 6A_{11}^2{A_{22}^2}A_{12}^2{A_{21}^2} \,-\, 4A_{11}{A_{22}}A_{12}^3{A_{21}^3}\,+\, A_{12}^4{A_{21}^4}\right]\\ 
&= m_4^2-4m_3^2m_1^2+6m_2^4-4m_1^2m_3^2+m_4^2 = 2m_4^2 - 8m_3^2m_1^2+6m_2^4.
\end{align*}

\end{example}

\begin{example}
For $k=6$, when the entries of $A$ are drawn from $\mathsf{N}(0,1)$, we have $f_6(n) = \frac{n!(n+2)!(n+4)!}{48}$ and thus $F_6(t) = \sum_{n=0}^{\infty}{\frac{f_6(n)t^n}{n!^2}} = \sum_{n=0}^{\infty}{(n+1)(n+2)(n+4)!t^n}$. Note that $F_6(t)$ diverges everywhere except $t = 0$. That said, we can still treat it as a formal power series.
\end{example}
We now describe prior work on the moments of the determinant of a random matrix with i.i.d. entries which are not Gaussian and our results. For $k = 2$, Tur\'{a}n observed that when $m_1 = 0$ and $m_2 = 1$, $f_2(n) = n!$. More generally, Fortet \cite{fortet1951random} showed that $f_2(n) = n! (m_2 + m_1^2(n-1)) (m_2 - m_1^2)^{n-1}$ and $F_2(t) = (1+m_1^2t)e^{(m_2 - m_1^2)t}$. For $k = 4$, Nyquist, Rice, and Riordan \cite{nyquist1954distribution} showed that when $m_1 = 0$, $F_4(t) = e^{t(m_4-3 m_2^2)}/(1-m_2^2 t)^3$, which implies that when $m_1 = 0$, $f_4(n) = n!^2 \sum_{j=0}^n \frac{1}{j!} \left(m_4 - 3m_2^2\right)^j m_2^{2n-2j} \binom{n-j+2}{2}$. A special case of the fourth moment of the determinant of $\pm 1$ matrices was also independently given by Tur\'{a}n \cite{turan1955problem}.

Recently, the authors made progress on analyzing $\mathbb{E}[\det(A)^k]$ in two different ways. First, the first author \cite{beck2022fourth} generalised Nyquist, Rice, and Riordan's result for $k = 4$ to the setting where $m_1$ is arbitrary \cite{beck2022fourth}, showing that
\begin{equation*}
f_4(n) = n!^2 \sum _{w=0}^2 \sum _{s=0}^{4-2w} \sum _{c=0}^{n-s} \binom{4-2 w}{s}\frac{(1+c) m_1^{s+2 w} \mu_2^{2c-w} \mu _3^s \left(\mu _4-3\mu_2^2\right){}^{n-c-s}}{(n-c-s)!(2-w)! w!} d_w(c),
\end{equation*}
where $d_0(c) = 2+c$, $d_1(c) = c(2+c)$ and $d_2(c) = c^3$.

Second, we solved the case when $k = 6$ and $m_1 = 0$ \cite{LP2022}. In particular, we showed that for any distribution $\Omega$ over $\mathbb{R}$ such that $m_1 = 0$ and $m_2 = 1$,
\begin{align*}
f_6(n) = n!^2 \sum _{j=0}^n \sum _{i=0}^j \sum_{k=0}^{n-j} \frac{(1+i) (2+i) (4+i)! }{48 (n-j-k)!}\binom{10}{k}\binom{14+j+2i}{j-i} q_6^{n-j-k} q_4^{j-i}q_3^k,
\end{align*}
where $q_6 = m_6-10 m_3^2-15m_4 + 30$, $q_4 = m_4-3$, and $q_3 = m_3^2$.

Both of these analyses were technical and involved intricate recurrence relations. In this paper, we give a considerably simpler derivation of these results by using analytic combinatorics on permutation tables.

\subsection{Related work}
There are two natural variants of the question of determining the moments of the determinant of a random matrix which have been analysed. First, for \( \mathbb{E}[\det(A)^k] \) when \( A \) is symmetric or Hermitian, Zhurbenko solved the \( k=2 \) case for i.i.d. entries with mean 0 \cite{zhurbenko1968moments}, and later works extended this to Wigner matrices \cite{gotze2009second,mehta1998probability}. Recently, we generalised these results for \( k=2 \) to Hermitian \( A \) with i.i.d. entries above the diagonal with real expected values and i.i.d. diagonal entries \cite{Symmetric2024BLP}. Second, for \( \mathbb{E}[\det(U^\top U)^{k/2}] \) when \( U \) is an \( n \times p \) matrix with i.i.d. entries (\( p \leq n \)), Dembo solved the \( k=4 \) case for mean-0 entries \cite{dembo1989random}. This was recently generalised to $U$ with i.i.d. entries from an arbitrary distribution by the first author \cite{beck2022fourth}.

\section{Techniques}
\subsection{Analytic combinatorics}
We now give an overview of the analytic combinatorics techniques we use for our analyses. This overview is taken from our paper ``On the second moment of the determinant of random symmetric, Wigner, and Hermitian matrices'' \cite{Symmetric2024BLP}. We follow the notation of the textbook \emph{Analytic combinatorics} \cite{flajolet2009analytic} by Flajolet and Sedgewick.

Let $\mathcal{A}$ be a set of objects with a given structure where each $\alpha \in \mathcal{A}$ has a size $|\alpha| \in \mathbb{N} \cup \{0\}$ and a weight $w(\alpha) \in \mathbb{C}$ (which may be negative or even complex). We call $\mathcal{A}$ a combinatorial strucuture and view it in terms of the structure that its elements satisfy.

We say that $\mathcal{A}$ is labeled if each $\alpha \in \mathcal{A}$ is composed of atoms labeled by $[|\alpha|]= \{1,2,3,4,\ldots, |\alpha|\}$. Moreover, we assume that $\mathcal{A}_n = \{\alpha \in \mathcal{A}: |\alpha| = n \}$ is finite for all $n \geq 0$. We define $a_n = \sum_{\alpha \in \mathcal{A}: |\alpha| = n}{w(\alpha)}$ to be the total weight of the objects with size $n$.

Combinatorial structures can be composed together. One common composition is the \emph{star product}. Note that a tuple $(\alpha, \beta) \in \mathcal{A} \times \mathcal{B}$ cannot represent a labeled object of any structure as the atoms of $\alpha$ and $\beta$ are labeled by $[|\alpha|]$ and $[|\beta|]$, respectively. Relabeling our $\alpha$, $\beta$ as $\alpha'$, $ \beta'$ so that every number from $1$ to $|\alpha|+|\beta|$ appears once, we get a correctly labeled object. There are of course many ways how to relabel the objects. The canonical way is to use the \emph{star product}. We say $(\alpha' ,\beta') \in \alpha \star \beta$ if the new labels in both $\alpha'$ and $\beta'$ increase in the same order as in $\alpha$ and $\beta$ separately. An example is illustrated below.
\begin{figure}[htb!]
    \centering
    \begin{tikzpicture}
        \node at (-0.9, 0.5) {$\Bigg{(}$};

        \draw (0, 0) rectangle (1, 1);
        \draw[fill=black] (0,0) circle (2pt);
        \draw[fill=black] (0,1) circle (2pt);
        \draw[fill=black] (1,1) circle (2pt);
        \draw[fill=black] (1,0) circle (2pt);    
        \node at (0.5, 0.5) {\textcolor{red}{$\alpha'$}};
        \node at (-0.3, 1) {6};
        \node at (1.3, 1) {5};
        \node at (-0.3, 0) {2};
        \node at (1.3, 0) {4};

        \node at (1.8, 0.3) {$,$};

        \draw (2.5, 0) -- (3, 1);
        \draw[fill=black] (2.5,0) circle (2pt);
        \draw[fill=black] (3,1) circle (2pt);
        \node at (2.4, 0.5) {\textcolor{red}{$\beta'
$}};
        \node at (3.3, 1) {3};
        \node at (2.8, 0) {1};

        \node at (3.7, 0.5) {$\Bigg{)}$};

        \node at (4.3, 0.5) {$\in$};

        \begin{scope}[shift={(5, 0)}]
            \draw (0, 0) rectangle (1, 1);
            \draw[fill=black] (0,0) circle (2pt);
            \draw[fill=black] (0,1) circle (2pt);
            \draw[fill=black] (1,1) circle (2pt);
            \draw[fill=black] (1,0) circle (2pt);    
            \node at (0.5, 0.5) {\textcolor{red}{$\alpha$}};
            \node at (-0.3, 1) {4};
            \node at (1.3, 1) {3};
            \node at (-0.3, 0) {1};
            \node at (1.3, 0) {2};

            \node at (1.8, 0.5) {$\star$};

            \draw (2.5, 0) -- (3, 1);
            \draw[fill=black] (2.5,0) circle (2pt);
            \draw[fill=black] (3,1) circle (2pt);
            \node at (2.4, 0.5) {\textcolor{red}{$\beta$}};
            \node at (3.3, 1) {2};
            \node at (2.8, 0) {1};
        \end{scope}
    \end{tikzpicture}
    \label{fig:starprod}
\end{figure}

A key concept for labeled combinatorial structures is their exponential generating function (EGF for short) defined as $ A(t) = \sum_{n = 0}^{\infty}{\sum_{\alpha \in \mathcal{A}_n}{\frac{w(\alpha)t^n}{n!}}} = \sum_{n=0}^\infty{{a_n}\frac{t^n}{n!}}$. These exponential generating functions encode the relationships between combinatorial structures (i.e., how they are composed). We can write the following relations.

\bgroup
\renewcommand{\arraystretch}{1.3}
\begin{table}[tbh!]
    \centering
    \begin{tabular}{|c|c|c|c|}
    \hline
        $\mathcal{C}$ & representation of $\mathcal{C}$ & $w_\mathcal{C}(\gamma), \gamma \in \mathcal{C}$ & $C(t)$ \\
       \hline
       \ru{2} $\mathcal{A} + \mathcal{B}$ & $\mathcal{A}\cup \mathcal{B}$ & $\begin{cases}
    w_\mathcal{A}(\gamma) & \text{if } \gamma \in \mathcal{A}, \\
    w_\mathcal{B}(\gamma) & \text{if } \gamma \in \mathcal{B}
\end{cases}$ & $A(t) + B(t)$\\
       $\mathcal{A} \star \mathcal{B}$ &  $\{\gamma \mid \gamma \in \alpha \star \beta, \alpha \in \mathcal{A}, \beta \in \mathcal{B}\}$ & $w_\mathcal{A}(\alpha)w_\mathcal{B}(\beta)$ & $A(t) B(t)$\\
       $\textsc{Seq}(\mathcal{A})$ & $\sum_{k=0}^\infty\textsc{Seq}_k(\mathcal{A})$ & & $\frac{1}{1-A(t)}$\\
       $\textsc{Set}(\mathcal{A})$ & $\sum_{k=0}^\infty\textsc{Set}_k(\mathcal{A})$ & & $\exp{(A(t))}$\\
       $\textsc{Cyc}(\mathcal{A})$ & $\sum_{k=1}^\infty\textsc{Cyc}_k(\mathcal{A})$ & & $\ln \left(\frac{1}{1-A(t)}\right)$\\
    \hline
    \end{tabular}
    \label{tab:EGFancom}
\end{table}
\egroup

The meaning of $\textsc{Seq}_k(\mathcal{A})$, $\textsc{Set}_k(\mathcal{A})$, and $\textsc{Cyc}_k(\mathcal{A})$ is as follows.
\begin{itemize}
    \item $\textsc{Seq}_k(\mathcal{A})$ is shorthand for a \emph{sequence} and indeed it can be represented as (relabeled) $k$-tuples of objects taken from $\mathcal{A}$. Note that since everything is relabeled, even though $\alpha_i,\alpha_j$ might be the same for different $i, j$, the corresponding $\alpha_i'$, $\alpha_j'$ are always distinct. Formally, $\textsc{Seq}_k(\mathcal{A}) = \{(\alpha_1',\ldots,\alpha_k') \mid \alpha_i \in \mathcal{A}, i \in [k]\}$, where $(\alpha'_1,\ldots,\alpha'_k) \in \alpha_1 \star \cdots \star \alpha_k$.
    \item $\textsc{Set}_k(\mathcal{A})$ is a structure of \emph{sets} of $k$ (relabeled) objects from $\mathcal{A}$, that is, the order of the objects $\alpha_i'$ is irrelevant. Formally, $\textsc{Set}_k(\mathcal{A}) = \{\{\alpha_1',\ldots,\alpha_k'\} \mid \alpha_i \in \mathcal{A}, i \in [k]\}$. Alternatively, $\textsc{Set}_k(\mathcal{A})$ can be represented as the structure of classes of $k$-tuples in $\textsc{Seq}_k(\mathcal{A})$ which differ up to some permutation.
    \item $\textsc{Cyc}_k(\mathcal{A})$ represents the structure of classes of $k$-tuples in $\textsc{Seq}_k(\mathcal{A})$ which differ up to some cyclical permutation.
\end{itemize}

For completeness, we briefly explain these results. To see that the exponential generating function for $\mathcal{A} \star \mathcal{B}$ is $A(t) B(t)$, let $a_n$, $b_n$, and $c_n$ be the total weight of the objects of size $n$ in $\mathcal{A}$, $\mathcal{B}$, and $\mathcal{A} \star \mathcal{B}$ respectively. We have that $c_n = \sum_{j=0}^{n}{\binom{n}{j}{a_j}b_{n-j}}$, so $C(t) = \sum_{n=0}^{\infty}{\frac{{c_n}t^n}{n!}} = \sum_{n=0}^{\infty}{\sum_{j=0}^{n}{\frac{{a_j}t^{j}}{j!} \cdot \frac{b_{n-j}t^{n-j}}{(n-j)!}}} = A(t) B(t)$.
The generating functions for $\textsc{Seq}(\mathcal{A})$, $\textsc{Set}(\mathcal{A})$, and $\textsc{Cyc}(\mathcal{A})$ come from the Taylor series $\frac{1}{1-x} = \sum_{k=0}^{\infty}{x^{k}}$, $e^{x} = \sum_{k=0}^{\infty}{\frac{x^{k}}{k!}}$, and $-\ln(1-x) = \sum_{k=1}^{\infty}{\frac{x^k}{k}}$ respectively.

\begin{example}
Let $\mathcal{D}$ be the combinatorial structure of all derangements (i.e., permutations of $[n]$ where no element is mapped to itself). Any derangement can be decomposed into cycles of length at least two. Attaching a tag $u$ to each cycle so that the weight of each derangement is equal to $u^{\# \text{ of cycles}}$, we get a structure $\mathcal{D}_u$ which can be also constructed as $\mathcal{D}_u = \textsc{Set}\big{(}u\,\textsc{Cyc}_{\geq 2}\big{(}\, \oneincirc \,\big{)}\big{)}$. In terms of generating functions, this translates to $D_u(t) = \exp(-u t - u\ln(1-t)) = e^{-ut}/(1-t)^u$.
\end{example}


\subsection{Permutation tables}
As is standard, we denote the set of all permutations of $[n] = \{1,\ldots,n\}$ by $S_n$ and we define the sign $\sgn(\pi)$ of a permutation $\pi \in S_n$ to be $1$ if $\pi$ is the product of an even number of transpositions and $-1$ if $\pi$ is the product of an odd number of transpositions.
\begin{definition}
We say $\tau$ is a permutation ($k$-)table with $n$ columns 
if $\tau$ is a $k \times n$ table and there are permutations $\pi_1,\ldots,\pi_k \in S_n$ such that the $j$th row of $\tau$ is $(\pi_j(1),\ldots,\pi_j(n))$. 
We denote the set of all such $k$-tables with $n$ columns by $F_{k,n}$. Equivalently, $F_{k,n} = S_n \times \cdots\times S_n$ ($k$ copies of $S_n$). Finally, we denote $k$-tables with any number of columns by $F_k$. 
That is, structurally, $F_k = \cup_{n=0}^\infty F_{k,n}$.
\end{definition}
\begin{definition}
We define the sign of a table as the product of the signs of the permutations in its rows.
\end{definition}
\begin{definition}
We define the weight of the $i$-th column of $\tau \in F_k$ to be the expectation $\Exx \left[\prod_{j=1}^k A_{i\pi_j(i)}\right]$. We define the weight $w(\tau)$ of the whole table $\tau$ to be the product of the weights of its columns.
\end{definition}

\begin{example}
The following example in Figure \ref{fig:exatablf} shows a permutation table $\tau \in  F_{4,9}$ with weight $w(\tau) = m_1^{12}m_2^{7}m_3^2m_4$. The weight of each individual column is shown below each column. For instance, the second column corresponds to the term $A_{26}A_{22}A_{26}A_{23}$, whose expectation is ${m_1^2}m_2$.
\renewcommand{\arraystretch}{0.84}
\begin{figure}[H]
\centering
    \begin{tabular}{|
      >{\columncolor{mycolor}\centering}p{2.2em}|
      >{\columncolor{mycolor}\centering}p{2.2em}|
      >{\columncolor{mycolor}\centering}p{2.2em}|
      >{\columncolor{mycolor}\centering}p{2.2em}|
      >{\columncolor{mycolor}\centering}p{2.2em}|
      >{\columncolor{mycolor}\centering}p{2.2em}|
      >{\columncolor{mycolor}\centering}p{2.2em}|
      >{\columncolor{mycolor}\centering}p{2.2em}|
      >{\columncolor{mycolor}\centering}p{2.2em}|
      >{\centering\arraybackslash}p{1.5em}
      }
    \hhline{|=========|~}
    \ru{0.9}1 & 6 & 3 & 9 & 5 & 2 & 7 & 8 & 4 & $+$ \\
        3 & 2 & 1 & 9 & 4 & 6 & 7 & 5 & 8 & $+$ \\
        4 & 6 & 1 & 9 & 3 & 2 & 7 & 5 & 8 & $+$ \\ 
        2 & 3 & 1 & 5 & 4 & 6 & 7 & 8 & 9 & $-$\\   
        \hhline{|=========|~}
        \multicolumn{1}{c}{\ru{1.2}
        $m_1^4$} & 
        \multicolumn{1}{c}{\!\!$m_1^2m_2$\!\!} & 
        \multicolumn{1}{c}{\!\!$m_1m_3$\!\!} & 
        \multicolumn{1}{c}{\!\!$m_1m_3$\!\!} & 
        \multicolumn{1}{c}{\!\!$m_1^2m_2$\!\!} & 
        \multicolumn{1}{c}{\!\!$m_2^2$\!\!} & 
        \multicolumn{1}{c}{\!\!$m_4$\!\!} & 
        \multicolumn{1}{c}{\!\!$m_2^2$\!\!} & 
        \multicolumn{1}{c}{\!\!$m_1^2m_2$\!\!} &
        \multicolumn{1}{c}{}        \end{tabular}
\caption{A permutation table $\tau \in  F_{4,9}$ with $w(\tau) = m_1^{12}m_2^{7}m_3^2m_4$ and $\sgn(\tau) = -1$.}
\label{fig:exatablf}
\end{figure}
\end{example}

\begin{proposition}For any distribution $\Omega$, we have $    f_k(n) = \sum_{\tau \in F_{k,n}} w(\tau) \sgn(\tau)$.
\end{proposition}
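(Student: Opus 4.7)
The plan is to simply expand $\det(A)^k$ using the Leibniz formula and then show that this expansion matches the sum over permutation tables term-by-term.

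First I would write $\det(A) = \sum_{\pi \in S_n} \sgn(\pi) \prod_{i=1}^n A_{i\pi(i)}$ and raise both sides to the $k$th power. Multiplying out, this gives
\begin{equation*}
\det(A)^k = \sum_{(\pi_1,\ldots,\pi_k) \in S_n^k} \left(\prod_{j=1}^k \sgn(\pi_j)\right) \prod_{j=1}^k \prod_{i=1}^n A_{i\pi_j(i)}.
\end{equation*}
Taking expectations and pulling the sum outside by linearity, every $k$-tuple $(\pi_1,\ldots,\pi_k)$ corresponds bijectively to a table $\tau \in F_{k,n}$ whose rows are $\pi_1,\ldots,\pi_k$, and the prefactor $\prod_j \sgn(\pi_j)$ is precisely $\sgn(\tau)$ by definition. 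So it only remains to identify $\mathbb{E}\bigl[\prod_{j=1}^k \prod_{i=1}^n A_{i\pi_j(i)}\bigr]$ with $w(\tau)$.

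For that identification I would reorder the double product to group factors by column, writing $\prod_{j=1}^k \prod_{i=1}^n A_{i\pi_j(i)} = \prod_{i=1}^n \prod_{j=1}^k A_{i\pi_j(i)}$. The crucial observation is that for distinct values of the column index $i$, all factors $A_{i\pi_j(i)}$ live in distinct rows of $A$, and since the entries of $A$ are i.i.d., random variables from different rows are mutually independent. Therefore the expectation of the product over $i$ factors as a product of expectations:
\begin{equation*}
\mathbb{E}\left[\prod_{i=1}^n \prod_{j=1}^k A_{i\pi_j(i)}\right] = \prod_{i=1}^n \mathbb{E}\left[\prod_{j=1}^k A_{i\pi_j(i)}\right].
\end{equation*}
By definition, the $i$th factor on the right is the weight of the $i$th column of $\tau$, so the product equals $w(\tau)$. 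Combining everything yields $f_k(n) = \mathbb{E}[\det(A)^k] = \sum_{\tau \in F_{k,n}} \sgn(\tau)\, w(\tau)$.

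There is no real obstacle here; this is essentially a bookkeeping exercise. The only point one needs to be a little careful about is distinguishing the independence \emph{across} columns (which holds because different columns of $\tau$ correspond to different rows of $A$) from the \emph{within-column} behavior (where the entries $A_{i\pi_j(i)}$ for varying $j$ may coincide if several $\pi_j$ agree at $i$, which is why the within-column expectation is a genuine moment and not just a product of first moments). Once that is flagged, the proof reduces to swapping a product order and invoking independence.
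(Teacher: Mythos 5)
Your proof is correct and is essentially the paper's argument (which is stated in one line: expand $\det(A)$ by the Leibniz formula, raise to the $k$th power, and take expectations); you simply spell out the bookkeeping, including the column-wise factorization of the expectation via independence across distinct indices $i$. Nothing to add.
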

\begin{proof}
This follows directly from the expansion $\det A =  \sum_{\pi \in F_n} \sgn(\pi) \prod_{i \in \left[n\right]} A_{i \pi(i)}$ raised to the $k$-th power and by taking expectation as $f_k(n) = \Exx \left[(\det A)^k\right]$.
\end{proof}

\begin{example}
Figure \ref{Fig:CorPT} shows how we can compute $f_k(n)$ using the tables in $F_k$ for $n = 2$ and $k = 2$. Summing up the contributions from these tables, we have that $f_2(2) = 2m_2^2-2m_1^4 = 2(m_2-m_1^2)(m_2+m_1^2)$.
\begin{figure}[tbh!]
\centering
\setlength{\tabcolsep}{3.2pt}
\vspace{-1em}
\begin{GrayBox}
\begin{tabular}{>{\centering\arraybackslash}m{12ex}
>{\centering\arraybackslash}m{12ex}
>{\centering\arraybackslash}m{16ex}
>{\centering\arraybackslash}m{16ex}
>{\centering\arraybackslash}m{16ex}}
$\hspace{2ex}(\det A)^2 \hspace{2ex}=$ & $A_{1\mathbf{1}}^2A_{2\mathbf{2}}^2$ & $\hspace{-3.5ex}-\hspace{1.5ex}A_{1\mathbf{1}}A_{2\mathbf{2}}A_{12}A_{21}$ & $\hspace{-1ex}- \hspace{1ex}A_{12}A_{21}A_{1\mathbf{1}}A_{2\mathbf{2}}$ & $\hspace{-4.5ex}+\hspace{2ex} A_{12}^2A_{21}^2$\\[0.45em]
    $F_{2,2} :$ & \begin{tabular}{|>{\columncolor{mycolor}}c|>{\columncolor{mycolor}}c|}
    \hline
        $\mathbf{1}$ & $\mathbf{2}$\\
        $\mathbf{1}$ & $\mathbf{2}$\\
    \hline
    \end{tabular}
    &
        \begin{tabular}{|>{\columncolor{mycolor}}c|>{\columncolor{mycolor}}c|}
    \hline
        $\mathbf{1}$ & $\mathbf{2}$\\
        $2$ & $1$\\
    \hline
    \end{tabular}
    &
        \begin{tabular}{|>{\columncolor{mycolor}}c|>{\columncolor{mycolor}}c|}
    \hline
        $2$ & $1$\\
        $\mathbf{1}$ & $\mathbf{2}$\\
    \hline
    \end{tabular}
    &
        \begin{tabular}{|>{\columncolor{mycolor}}c|>{\columncolor{mycolor}}c|}
    \hline
        $2$ & $1$\\
        $2$ & $1$\\
    \hline
    \end{tabular}
    \\[1.4em]
    Weight: & $m_2m_2$ & $m_1^2m_1^2$ & $m_1^2 m_1^2$ & $m_2m_2$\\[0.05em]
    Sign: & $+$ & $-$ & $-$ & $+$
\end{tabular}
\end{GrayBox}
\vspace{-1em}
\caption{Correspondence between 
$f_2(2)$ and the permutation tables in $F_{2,2}$}
\label{Fig:CorPT}
\end{figure}
\end{example}

\begin{definition}
\label{def:EGF}
We let $\mathcal{F}_{k,n}$ denote the tables in $F_{k,n}$ where the column order is irrelevant (i.e., two tables in $\mathcal{F}_{k,n}$ are equivalent if one table can be obtained from the other by permuting its columns) and we define $\mathcal{F}_k = \cup_{n=0}^\infty \mathcal{F}_{k,n}$. 
These tables now have exponential generating functions and can thus be analysed by tools of analytic combinatorics for labeled combinatorial structures. Also, we define 
$\hat{f}_k(n) = \sum_{t \in \mathcal{F}_{k,n}} w(t) \sgn t$. Since there are $n!$ ways how we can permute the columns, we have that $f_k(n)=n! \hat{f}_k(n)$.
\end{definition}

\begin{example}
Let us compute $f_2(3)$. We may write $f_2(3) = 3! \hat{f}_2(3)$, where $\hat{f}_2(3)= \sum_{t\in \mathcal{F}_{2,3}} w(t)\sgn t$. Figure \ref{Fig:CorPT2} lists all elements of $\mathcal{F}_{2,3}$ and shows their weights and signs. Summing the contributions, we get $\hat{f}_2(3) = m_2^3-3m_2m_1^4+2m_1^6 = (m_2+2m_1^2)(m_2-m_1^2)^2$ and thus $f_2(3) = 3!(m_2+2m_1^2)(m_2-m_1^2)^2$.
\begin{figure}[tbh!]
\centering
\setlength{\tabcolsep}{3.2pt}
\vspace{-1em}
\begin{GrayBox}
\begin{tabular}{>{\centering\arraybackslash}m{7ex}
>{\centering\arraybackslash}m{10ex}
>{\centering\arraybackslash}m{10ex}
>{\centering\arraybackslash}m{10ex}
>{\centering\arraybackslash}m{10ex}
>{\centering\arraybackslash}m{10ex}
>{\centering\arraybackslash}m{10ex}}
    $\mathcal{F}_{2,3} :$ &
    \begin{tabular}{|>{\columncolor{mycolor}}c|>{\columncolor{mycolor}}c|>{\columncolor{mycolor}}c|}
    \hline
        $1$ & $2$ & $3$\\
        $1$ & $2$ & $3$\\
    \hline
    \end{tabular}
    &
    \begin{tabular}{|>{\columncolor{mycolor}}c|>{\columncolor{mycolor}}c|>{\columncolor{mycolor}}c|}
    \hline
        $1$ & $2$ & $3$\\
        $1$ & $3$ & $2$\\
    \hline
    \end{tabular}
    &
    \begin{tabular}{|>{\columncolor{mycolor}}c|>{\columncolor{mycolor}}c|>{\columncolor{mycolor}}c|}
    \hline
        $1$ & $2$ & $3$\\
        $3$ & $2$ & $1$\\
    \hline
    \end{tabular}
    &
    \begin{tabular}{|>{\columncolor{mycolor}}c|>{\columncolor{mycolor}}c|>{\columncolor{mycolor}}c|}
    \hline
        $1$ & $2$ & $3$\\
        $2$ & $1$ & $3$\\
    \hline
    \end{tabular}
        &
    \begin{tabular}{|>{\columncolor{mycolor}}c|>{\columncolor{mycolor}}c|>{\columncolor{mycolor}}c|}
    \hline
        $1$ & $2$ & $3$\\
        $3$ & $1$ & $2$\\
    \hline
    \end{tabular}
    &
    \begin{tabular}{|>{\columncolor{mycolor}}c|>{\columncolor{mycolor}}c|>{\columncolor{mycolor}}c|}
    \hline
        $1$ & $2$ & $3$\\
        $2$ & $3$ & $1$\\
    \hline
    \end{tabular}
    \\[1.4em]
    Weight: & $m_2m_2m_2$ & $m_2m_1^2m_1^2$ & $m_1^2 m_2m_1^2$ & $m_1^2m_1^2m_2$ & $m_1^2m_1^2m_1^2$ & $m_1^2m_1^2m_1^2$\\[0.05em]
    Sign: & $+$ & $-$ & $-$ & $-$ & $+$ & $+$
\end{tabular}
\end{GrayBox}
\vspace{-1em}
\caption{Correspondence between $\hat{f}_2(3)$ and the permutation tables in $\mathcal{F}_{2,3}$}
\label{Fig:CorPT2}
\end{figure}
\end{example}

\subsection{Marked permutations and tables}

Instead of expressing $f_k(n)$ in terms of $m_r$, we can express it in terms of $\mu_r = \Exx [B_{ij}^r]$ where $B_{ij} = A_{ij} - m_1$.


\begin{definition}
We say $\sigma$ is a \emph{marked permutation} if it is either in $S_n$ (i.e., $\sigma$ is a permutation of $[n]$) or $\sigma$ is formed from some $\pi \in S_n$ by replacing at most one element by the mark ``$\times$''. 
We define $\sgn(\sigma) = \sgn(\pi)$ and $B^\times_{i\sigma(i)} = m_1$ if $i$ is marked and $B^\times_{i\sigma(i)} = B_{i\pi(i)}$ otherwise. We write $S^\times_n$ for the set of all marked permutations.
\end{definition}
\begin{proposition} In terms of marked permutations, $\det(A) = \sum_{\sigma \in S^\times_n} \sgn(\sigma) \prod_{i=1}^n B_{i \sigma(i)}^\times$.
\end{proposition}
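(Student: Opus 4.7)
The plan is to start from the standard cofactor expansion $\det(A) = \sum_{\pi \in S_n} \sgn(\pi) \prod_{i=1}^n A_{i\pi(i)}$, substitute $A_{ij} = B_{ij} + m_1$, and expand the product for each $\pi$: at every subset $S \subseteq [n]$ of ``marked'' positions we pick $m_1$ and at the remaining positions we pick $B_{i\pi(i)}$. Swapping the order of summation gives
\[
\det(A) = \sum_{S \subseteq [n]} m_1^{|S|} \sum_{\pi \in S_n} \sgn(\pi) \prod_{i \notin S} B_{i\pi(i)}.
\]
The target identity records precisely the contributions from $|S| \leq 1$ as a sum over $S_n^\times$, so the whole proposition reduces to showing that every $S$ with $|S| \geq 2$ contributes zero.

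For that I would use a sign-reversing involution. Fix any two distinct elements $i_0, j_0 \in S$ and, given $\pi$, define $\pi'$ by swapping the values $\pi(i_0)$ and $\pi(j_0)$ and leaving $\pi(i)$ unchanged for $i \neq i_0, j_0$. Then $\pi'$ is a permutation with $\sgn(\pi') = -\sgn(\pi)$, and because $i_0, j_0 \in S$ the product $\prod_{i \notin S} B_{i\pi(i)}$ depends only on the values $\pi(i)$ for $i \notin S$, hence is unchanged by the swap. Since $\pi(i_0) \neq \pi(j_0)$, the map $\pi \mapsto \pi'$ is a fixed-point-free involution on $S_n$, so all contributions pair up and cancel.

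It only remains to match the surviving terms with $S_n^\times$. The $|S|=0$ contribution is $\sum_{\pi \in S_n} \sgn(\pi) \prod_i B_{i\pi(i)}$, which is exactly the sum over unmarked $\sigma \in S_n^\times$. For each $k \in [n]$, taking $S = \{k\}$ contributes $m_1 \sum_{\pi \in S_n} \sgn(\pi) \prod_{i \neq k} B_{i\pi(i)}$, which coincides with the sum over marked permutations whose mark is at position $k$, using the convention $B^\times_{k\sigma(k)} = m_1$. Summing yields $\sum_{\sigma \in S_n^\times} \sgn(\sigma) \prod_i B^\times_{i\sigma(i)}$, completing the argument. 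The whole proof is essentially bookkeeping; the only real content is the sign-reversing involution, which is the main (and only) point that needs care.
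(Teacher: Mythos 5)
Your proof is correct. Note that the paper states this proposition without any proof at all, so there is nothing to compare against line by line; your write-up supplies the missing argument in full. The decomposition over subsets $S\subseteq[n]$ of marked positions, the identification of the $|S|\le 1$ terms with the sum over $S_n^\times$ (using that a permutation is recoverable from its values off a single position, so pairs $(\pi,k)$ biject with singly-marked permutations), and the sign-reversing involution $\pi\mapsto\pi\circ(i_0\,j_0)$ killing all $|S|\ge 2$ terms are all sound. The only remark worth making is that the cancellation step has a one-line alternative: writing each row as $B_i + m_1\mathbf{1}$ and expanding $\det$ by multilinearity, every term with two or more rows equal to $m_1\mathbf{1}$ vanishes because a determinant with two proportional rows is zero. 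Your involution is exactly the combinatorial unpacking of that fact, so the two routes are equivalent in content; the multilinearity phrasing is shorter, while yours is more self-contained and makes the pairing of cancelling permutation terms explicit.
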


\begin{definition}
We say $\tau$ is a marked $k$-table with $n$ columns if its rows are marked permutations $\sigma_j, j = 1,\ldots,k$ of order $n$. We define $G^{\times}_{k,n}=S_n^\times\times\cdots \times S_n^\times$ ($k$ copies) to be the set of all such tables and let $G^\times_k=\sum_{n=0}^\infty G_{k,n}^\times$ be the set of all marked tables with $k$ rows. We define the marked weight $w$ of the $i$-th column of $\tau \in G_{k,n}^{\times}$ as the expectation $\Exx \left[\prod_{j=1}^k B^\times_{i\sigma_j(i)}\right]$. Similarly, we define the sign $\sgn(\tau)$ of $\tau$ to be the product of the signs of $\sigma_j, j=1,\ldots,k$ and we define the marked weight $w(\tau)$ of $\tau$ to be the product of the weights of its individual columns.
\end{definition}

\begin{example}
Figures \ref{fig:extratable0} and \ref{fig:extratable} show two examples of marked permutation tables.
\renewcommand{\arraystretch}{0.9}
\begin{figure}[tbh!]
\centering
\begin{minipage}{.49\textwidth}
        \centering
    \begin{tabular}{|>{\columncolor{mycolor}}c|>{\columncolor{mycolor}}c|>{\columncolor{mycolor}}c|>{\columncolor{mycolor}}c|>{\columncolor{mycolor}}c|>{\columncolor{mycolor}}c|>{\columncolor{mycolor}}c|>{\columncolor{mycolor}}c|>{\columncolor{mycolor}}c|}
    \hhline{|=========|}
        \ru{0.9}1 & $\times$ & 3 & 4 & 5 & 2 & 7 & 8 & 9 \\
        3 & 2 & 1 & 9 & 4 & 6 & 7 & 5 & 8 \\
        1 & $\times$ & 3 & 9 & 4 & 2 & 7 & 5 & 8 \\ 
        3 & 2 & 1 & 4 & 5 & 6 & 7 & 8 & 9 \\
    \hhline{|=========|}
    \end{tabular}
\caption{$\tau \in  G_{4,9}^{2}$, $w(\tau) = m_1^2\mu_2^{15}\mu_4$.}
\label{fig:extratable0}
    \end{minipage}%
\hfill
\begin{minipage}{.49\textwidth}
        \centering
    \begin{tabular}{|>{\columncolor{mycolor}}c|>{\columncolor{mycolor}}c|>{\columncolor{mycolor}}c|>{\columncolor{mycolor}}c|>{\columncolor{mycolor}}c|>{\columncolor{mycolor}}c|>{\columncolor{mycolor}}c|>{\columncolor{mycolor}}c|>{\columncolor{mycolor}}c|}
    \hhline{|=========|}
        \ru{0.9}$\times$ & 2 & 3 & 4 & 5 & 6 & 7 & 8 & 9 \\
        $\times$ & 2 & 1 & 9 & 4 & 6 & 7 & 5 & 8 \\ 
        2 & $\times$ & 1 & 9 & 4 & 6 & 7 & 5 & 8 \\ 
        2 & $\times$ & 3 & 4 & 5 & 6 & 7 & 8 & 9 \\
    \hhline{|=========|}
    \end{tabular}
\caption{$\tau \in  G_{4,9}^{4}$, $w(\tau) = m_1^4\mu_2^{12}\mu_4^2$.}
\label{fig:extratable}
    \end{minipage}%
\end{figure}
\vspace{-1em}
\end{example}

\noindent
Since $\mu_1 =0$, it turns out that most tables in $G^\times_{k,n}$ have $w(\tau) = 0$.
\begin{definition}
We say a table $\tau \in G^\times_{k,n}$ is trivial if its weight vanishes, otherwise the table is nontrivial. The set all nontrivial tables form a subset $T^\times_{k,n} \subseteq G^\times_{k,n}$.
\end{definition}
\begin{proposition}\label{prop:binomS}
For any distribution $\Omega$, we have $f_k(n) = \sum_{\tau \in T_{k,n}^{\times}} w (t) \sgn(\tau)$.
\end{proposition}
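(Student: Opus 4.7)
The plan is to mimic the proof of the unmarked analogue $f_k(n) = \sum_{\tau \in F_{k,n}} w(\tau)\sgn(\tau)$ given earlier, but starting instead from the marked-permutation version of the Leibniz formula $\det(A) = \sum_{\sigma \in S_n^\times} \sgn(\sigma) \prod_{i=1}^n B_{i\sigma(i)}^\times$ stated just above. The argument is purely structural: expand a $k$-fold product, regroup column by column, factor the expectation by independence, and then discard the tables whose weight vanishes.

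First, I would raise the marked Leibniz formula to the $k$-th power. The resulting sum is indexed by $k$-tuples $(\sigma_1,\ldots,\sigma_k) \in (S_n^\times)^k$, which is precisely $G_{k,n}^\times$, and the product of row signs $\prod_{j=1}^k \sgn(\sigma_j)$ is $\sgn(\tau)$ by definition. Swapping the order of the double product over $i$ and $j$ gives
\[
(\det A)^k \;=\; \sum_{\tau \in G_{k,n}^\times} \sgn(\tau) \prod_{i=1}^n \prod_{j=1}^k B_{i\sigma_j(i)}^\times,
\]
so the random factors are now grouped by the columns of $\tau$.

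Next, I would take expectations. The key observation is that column $i$ of any $\tau$ involves only entries of the form $B_{i,\sigma_j(i)}^\times$, all lying in the $i$-th row of $B$, with marked positions contributing the deterministic constant $m_1$. Since the rows of $A$, and hence of $B$, are independent, the expectation of the full product factors across the $n$ columns of $\tau$, and each column factor is by definition the marked column weight. This yields the intermediate identity
\[
f_k(n) = \sum_{\tau \in G_{k,n}^\times} w(\tau)\sgn(\tau).
\]
Restricting the index set to $T_{k,n}^\times$ is then free, since every $\tau \in G_{k,n}^\times \setminus T_{k,n}^\times$ has $w(\tau) = 0$ by definition and therefore contributes nothing to the sum.

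The argument involves no combinatorial cancellation and no recursion. The only substantive point — and hence the main obstacle — is the column-wise factorization of the expectation; this is immediate once one observes that each column of a table draws all of its random factors from a single row of $B$, so that different columns of $\tau$ are independent by the i.i.d.\ assumption on $A$.
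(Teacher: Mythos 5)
Your proposal is correct and follows essentially the same route the paper takes (the paper leaves this proposition unproved, but its proof of the unmarked analogue for $F_{k,n}$ is exactly this expand-and-take-expectations argument): raise the marked Leibniz expansion to the $k$-th power, factor the expectation column by column using the fact that distinct table columns involve entries from distinct matrix rows, and drop the zero-weight tables. The only nontrivial ingredient, the marked Leibniz formula with at most one mark per row, is a separately stated proposition in the paper, so citing it as you do is legitimate.
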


\begin{example}
The correspondence between $f_k(n)$ and marked permutation tables is shown below 
for $n=2$ and $k=2$. By summing up the contributions from all nontrivial marked tables, since $\mu_2=m_2-m_1^2$, we again get
\begin{equation*}
f_2(2) = 2\mu_2^2+4m_1^2\mu_2 = 2(m_2-m_1^2)(m_2+m_1^2).
\end{equation*}

\begin{figure}[H]
\centering
\setlength{\tabcolsep}{3.2pt}
\vspace{-1em}
\begin{GrayBox}
\vspace{-0.6em}
\hspace{-0.5em}
\begin{tabular}{>{\centering\arraybackslash}m{10ex}
>{\centering\arraybackslash}m{10ex}
>{\centering\arraybackslash}m{10ex}
>{\centering\arraybackslash}m{10ex}
>{\centering\arraybackslash}m{10ex}
>{\centering\arraybackslash}m{10ex}
>{\centering\arraybackslash}m{16ex}}
$\hspace{1.2ex}(\det A)^2 \hspace{0.5ex}=$ & $B_{11}^2B_{22}^2$ & $\hspace{-2.8ex} +\hspace{1.3ex} B_{12}^2B_{21}^2$ & $\hspace{-2.8ex} + \hspace{1.3ex} m_1^2B_{22}^2$ & $\hspace{-2.8ex} +\hspace{1.3ex} m_1^2B_{21}^2$ & $\hspace{-3.1ex} +\hspace{1.3ex} B_{11}^2m_1^2$ & $\hspace{-9.5ex} +\hspace{1ex} B_{12}^2m_1^2$\\[0.45em]
    $T^{\times}_{2,2} :$ & \begin{tabular}{|>{\columncolor{mycolor}}c|>{\columncolor{mycolor}}c|}
    \hline
        $1$ & $2$\\
        $1$ & $2$\\
    \hline
    \end{tabular}
    &
        \begin{tabular}{|>{\columncolor{mycolor}}c|>{\columncolor{mycolor}}c|}
    \hline
        $2$ & $1$\\
        $2$ & $1$\\
    \hline
    \end{tabular}
    &
        \begin{tabular}{|>{\columncolor{mycolor}\centering\arraybackslash}m{1ex}|>{\columncolor{mycolor}}c|}
    \hline
        $\!\times$ & $2$\\
        $\!\times$ & $2$\\
    \hline
    \end{tabular}
    &
        \begin{tabular}{|>{\columncolor{mycolor}\centering\arraybackslash}m{1ex}|>{\columncolor{mycolor}}c|}
    \hline
        $\!\times$ & $1$\\
        $\!\times$ & $1$\\
    \hline
    \end{tabular}
    &
        \begin{tabular}{|>{\columncolor{mycolor}}c|>{\columncolor{mycolor}\centering\arraybackslash}m{1ex}|}
    \hline
        $1$ & $\!\times$\\
        $1$ & $\!\times$\\
    \hline
    \end{tabular}
    &
        \hspace{-3em}\begin{tabular}{|>{\columncolor{mycolor}}c|>{\columncolor{mycolor}\centering\arraybackslash}m{1ex}|}
    \hline
        $2$ & $\!\times$\\
        $2$ & $\!\times$\\
    \hline
    \end{tabular}
    \\[1.4em]
    $w$: & $\mu_2\mu_2$ & $\mu_2\mu_2$ & $m_1^2 \mu_2$ & $m_1^2 \mu_2$ & $\mu_2 m_1^2$ & $\hspace{-3em}\mu_2 m_1^2$\\[0.0em]
    Sign: & $+$ & $+$ & $+$ & $+$ & $+$ & $\hspace{-3em}+$\\
\end{tabular}
\vspace{-0.8em}
\end{GrayBox}
\vspace{-1em}
\caption{Correspondence between $f_2(2)$ 
and marked permutation tables}
\label{Fig:CorPTM}
\end{figure}
\vspace{-1em}
\end{example}

\begin{definition}
\label{def:EGFmarked}
Similarly as in Definition \ref{def:EGF}, we let 
$\mathcal{G}^\times_{k,n}$ and $\mathcal{T}^\times_{k,n}$ 
denote the sets of tables $G^\times_{k,n}$ and $T^\times_{k,n}$ where the column order is irrelevant. 
\end{definition}

\begin{example}\label{Ex:T42x}
Let us derive $f_4(2)$. We may write $f_4(2) = 2! \hat{f}_{\!4}(2)$ and sum the contributions from tables in $\mathcal{T}_{4,2}^\times$. 
Figure \ref{Fig:CorPTMIrrel} below shows the members of $\mathcal{T}_{4,2}^\times$ with the corresponding sign and weight including multiplicity $(\#)$ as the members are displayed up to 
permutation of rows and substitution of $\{1,2\}$ for the elements $\{a,b\}$.

\begin{figure}[H]
\centering
\setlength{\tabcolsep}{3.2pt}
\vspace{-1em}
\begin{GrayBox}
\hspace{-1em}
\begin{tabular}{>{\centering\arraybackslash}m{4.5ex}
>{\centering\arraybackslash}m{4.5ex}
>{\centering\arraybackslash}m{4.5ex}
>{\centering\arraybackslash}m{6.5ex}
>{\centering\arraybackslash}m{6.5ex}
>{\centering\arraybackslash}m{5ex}
>{\centering\arraybackslash}m{5ex}
>{\centering\arraybackslash}m{6.5ex}
>{\centering\arraybackslash}m{5ex}
>{\centering\arraybackslash}m{5ex}
>{\centering\arraybackslash}m{5ex}
>{\centering\arraybackslash}m{5ex}}
    \begin{tabular}{c}
           \\
         $\!\mathcal{T}^{\times}_{4,2} :$ 
    \end{tabular} & \begin{tabular}{|>{\columncolor{mycolor}}c|>{\columncolor{mycolor}}c|}
    \hline
        $a$ & $b$\\
        $a$ & $b$\\
        $a$ & $b$\\
        $a$ & $b$\\
    \hline
    \end{tabular}
    &
        \begin{tabular}{|>{\columncolor{mycolor}}c|>{\columncolor{mycolor}}c|}
    \hline
        $a$ & $b$\\
        $a$ & $b$\\
        $b$ & $a$\\
        $b$ & $a$\\
    \hline
    \end{tabular}
    &
        \begin{tabular}{|>{\columncolor{mycolor}\centering\arraybackslash}m{1ex}|>{\columncolor{mycolor}}c|}
    \hline
        $\!\times$ & $b$\\
        $a$ & $b$\\
        $a$ & $b$\\
        $a$ & $b$\\
    \hline
    \end{tabular}
    &
        \begin{tabular}{|>{\columncolor{mycolor}\centering\arraybackslash}m{1ex}|>{\columncolor{mycolor}}c|}
    \hline
        $\!\times$ & $b$\\
        $\!\times$ & $b$\\
        $a$ & $b$\\
        $a$ & $b$\\
    \hline
    \end{tabular}
    &
        \begin{tabular}{|>{\columncolor{mycolor}\centering\arraybackslash}m{1ex}|>{\columncolor{mycolor}}c|}
    \hline
        $\!\times$ & $b$\\
        $\!\times$ & $b$\\
        $b$ & $a$\\
        $b$ & $a$\\
    \hline
    \end{tabular}
    &
        \begin{tabular}{|>{\columncolor{mycolor}\centering\arraybackslash}m{1ex}|>{\columncolor{mycolor}}c|}
    \hline
        $\!\times$ & $b$\\
        $a$ & $\!\times$\\
        $a$ & $b$\\
        $a$ & $b$\\
    \hline
    \end{tabular}
    &
    \begin{tabular}{|>{\columncolor{mycolor}\centering\arraybackslash}m{1ex}|>{\columncolor{mycolor}\centering\arraybackslash}m{1ex}|}
    \hline
        $\!\times$ & $b$\\
        $a$ & $\!\times$\\
        $a$ & $\!\times$\\
        $a$ & $b$\\
    \hline
    \end{tabular}
    &
        \begin{tabular}{|>{\columncolor{mycolor}\centering\arraybackslash}m{1ex}|>{\columncolor{mycolor}}c|}
    \hline
        $\!\times$ & $b$\\
        $\!\times$ & $b$\\
        $\!\times$ & $b$\\
        $\!\times$ & $b$\\
    \hline
    \end{tabular}
    &
        \begin{tabular}{|>{\columncolor{mycolor}\centering\arraybackslash}m{1ex}|>{\columncolor{mycolor}}c|}
    \hline
        $\!\times$ & $b$\\
        $\!\times$ & $b$\\
        $\!\times$ & $a$\\
        $\!\times$ & $a$\\
    \hline
    \end{tabular}
    &
    \begin{tabular}{|>{\columncolor{mycolor}\centering\arraybackslash}m{1ex}|>{\columncolor{mycolor}\centering\arraybackslash}m{1ex}|}
    \hline
        $\!\times$ & $b$\\
        $\!\times$ & $b$\\
        $b$ & $\!\times$\\
        $b$ & $\!\times$\\
    \hline
    \end{tabular}
    &
    \begin{tabular}{|>{\columncolor{mycolor}\centering\arraybackslash}m{1ex}|>{\columncolor{mycolor}\centering\arraybackslash}m{1ex}|}
    \hline
        $\!\times$ & $b$\\
        $\!\times$ & $b$\\
        $a$ & $\!\times$\\
        $a$ & $\!\times$\\
    \hline
    \end{tabular}
    \\[3.2em]
    $w_\times$: & $\mu_4^2$ & $\mu_2^4$ & $\!\! m_1\mu_3\mu_4$ & $\!\!m_1^2 \mu_2\mu_4$ & $m_1^2 \mu_2^3$ & $m_1^2\mu_3^2$ & $\!\! m_1^3\mu_2\mu_3$ & $m_1^4\mu_4$ & $m_1^4\mu_2^2$ & $m_1^4\mu_2^2$ & $m_1^4\mu_2^2$\\[0.1em]
    Sign: & $+$ & $+$ & $+$ & $+$ & $+$ & $+$ & $+$ & $+$ & $+$ & $+$ & $+$\\[0.1em]
    $\#$ & $1$ & $3$ & $8$ & $12$ & $12$ & $12$ & $24$ & $2$ & $6$ & $6$ & $6$\\[0.1em]
\end{tabular}
\vspace{-0.4em}
\end{GrayBox}
\vspace{-1em}
\caption{Correspondence between 
$f_4(2)$ and marked permutation tables $\mathcal{T}_{4,2}^\times$}
\label{Fig:CorPTMIrrel}
\end{figure}

By summing the contribution from all marked tables, we get $\hat{f}_{\!4}(2) = \mu _4^2+3 \mu _2^4+8 m_1 \mu _3 \mu _4+12 m_1^2 \mu _2 \mu _4+12 m_1^2 \mu _2^3 +12 m_1^2\mu _3^2+24 m_1^3 \mu _2 \mu _3+2 m_1^4 \mu _4+18 m_1^4 \mu _2^2$. Plugging $\mu_2 = m_2-m_1^2$, $\mu _3 = m_3 -3 m_2 m_1 + 2 m_1^3$, $\mu _4= m_4-4 m_3 m_1 +6 m_2m_1^2-3 m_1^4$ and expanding, we get $f_4(2) = 2! \hat{f}_{\!4}(2) = 2(m_4^2-4 m_1^2 m_3^2+3 m_2^4)$, which coincides with the introductory Example \ref{Ex:f42}.
\end{example}

\section{General fourth moment
}
In this section, we generalise the result for $F_4(t)|_{m_1=0}$ by Nyquist, Rice and Riordan.
\begin{theorem}[Beck 2023 \cite{beck2022fourth}]\label{Thm:FourthMoment}
For any distribution of $\Omega$,
\begin{equation*}
F_4(t) = \frac{e^{t(\mu_4-3 \mu _2^2)}}{(1-\mu_2^2 t)^3} \left((1+m_1\mu_3 t)^4+6m_1^2\mu_2 t\frac{(1+m_1 \mu_3t)^2}{1-\mu _2^2 t}+m_1^4 t\frac{1+7 \mu_2^2 t+4 \mu _2^4 t^2}{(1-\mu _2^2 t)^2}\right).
\end{equation*}
\end{theorem}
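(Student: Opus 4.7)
The plan is to realise $F_4(t)$ as the weighted exponential generating function of the combinatorial structure $\mathcal{T}_4^\times = \bigcup_{n\geq 0}\mathcal{T}_{4,n}^\times$, with each table carrying weight $w(\tau)\sgn(\tau)$, and then to decompose $\mathcal{T}_4^\times$ using the symbolic calculus of analytic combinatorics. This generalises the centred-case derivation of Nyquist--Rice--Riordan (sketched in the introduction's commented-out proof): each table decomposes into sub-tables whose EGFs multiply under the $\star$-product.

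The first step is to enumerate the non-trivial column types. Because $\mu_1=0$, every block of unmarked entries in a column must have multiplicity at least $2$, leaving exactly five admissible column types for $k=4$: type (U4), all four rows sharing a value, weight $\mu_4$, row-multiplicity $1$; type (U22), a $2{+}2$ split, weight $\mu_2^2$, row-multiplicity $3$; type (M1), one $\times$ plus three identical values, weight $m_1\mu_3$, row-multiplicity $4$; type (M2), two $\times$'s plus two identical values, weight $m_1^2\mu_2$, row-multiplicity $6$; and type (M4), four $\times$'s, weight $m_1^4$, row-multiplicity $1$. Any other partition of the four row-entries would create an unmarked singleton, which vanishes because $\mu_1=0$.

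The second step is to decompose $\mathcal{T}_4^\times$ according to the \emph{mark skeleton} of the table (the sub-structure formed by its mark columns). Because each row contains at most one $\times$, the mark skeletons split into three disjoint species, one for each summand of the theorem. In species~(A) the skeleton contains only M1 columns, with each of the four rows independently deciding whether to sit in its own M1 column; this contributes the skeleton-EGF $(1+m_1\mu_3 t)^4$, while the remaining unmarked columns form a centred sub-table contributing the Nyquist--Rice--Riordan factor $e^{(\mu_4-3\mu_2^2)t}/(1-\mu_2^2 t)^3$. In species~(B) the skeleton contains exactly one M2 column, whose marked row-pair is chosen in $\binom{4}{2}=6$ ways, and the two remaining rows each independently decide whether to occupy an M1 column; additionally, the M2 column acts as an anchor that enlarges the family of admissible $\textsc{Cyc}_{\geq 2}$ constructions on U22 columns by the complementary pairing, producing the extra $1/(1-\mu_2^2 t)$ factor in the second summand. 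Species~(C) gathers the remaining configurations in which all four rows are marked (one M4 column, or two disjoint M2 columns); the anchored cycle structure they induce combines, after a case analysis, into $m_1^4 t\,(1+7\mu_2^2 t+4\mu_2^4 t^2)/(1-\mu_2^2 t)^2$.

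Taking the $\star$-product of the mark-skeleton EGF and the modified centred EGF in each species and summing yields the claimed identity. The main obstacle will be the species~(C) analysis and in particular the compact rational factor $(1+7\mu_2^2 t+4\mu_2^4 t^2)/(1-\mu_2^2 t)^2$: this requires a careful enumeration of how $\textsc{Cyc}$ chains of U22 columns may be anchored on the unique M4 column (or equivalently on the pair of disjoint M2 columns), together with sign-bookkeeping for $\prod_j\sgn(\pi_j)$ on each sub-table. The expectation (consistent with Example~\ref{Ex:T42x}) is that every nontrivial sub-table contributes sign $+1$, so that the signed and unsigned EGFs coincide; verifying this positivity is an integral part of the combinatorial accounting.
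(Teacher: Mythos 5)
Your decomposition coincides with the paper's: the trichotomy according to how many rows carry their mark in an M2 or M4 column is exactly Equation \eqref{eq:F4expan}, $F_4(t) = (1+m_1\mu_3 t)^4S^0_4(t) + (1+m_1\mu_3 t)^2S^2_4(t) + S^4_4(t)$; your five column types and weights match the paper's inventory of $\mathcal{T}^\times_4$ columns; and your species~(A) is Proposition~\ref{Prop:NRR} verbatim. Where you genuinely diverge is in how the marked summands are to be evaluated. You propose a direct symbolic enumeration in which an M2 (resp.\ M4) column serves as an anchor for chains of U22 columns, with row-arrangement multiplicities and signs tracked by hand. The paper instead obtains $S^2_4$, $S^{4/1}_4$ and $S^{4/2}_4$ by pointing/covering bijections on structures it already understands: cover a pair of equal entries by marks ($6$ ways in a $4$-column, $2$ in a $2$-column) to pass from $\mathcal{S}^0_4$ to $\mathcal{S}^2_4$; cover an entire column to reach $\mathcal{S}^{4/1}_4$; swap the two marks of an $\mathcal{S}^{4/1}_4$ table with a pair elsewhere (each image arising twice by symmetry) to reach $\mathcal{S}^{4/2}_4$. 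Each operation translates into a first-order differential operator in $\mu_4$ and $t$ applied to a known EGF, so no new cycle enumeration, no new multiplicity count (your ``6 versus 3'' for anchored versus free cycles), and no new sign analysis are required. Your route can be made to work, but every one of those checks would have to be done explicitly, and implicit in your bijective claims is the fact (which the paper uses but you do not verify) that the two values hidden by an M2 column must coincide, and likewise that the four values hidden by an M4 column coincide in pairs --- otherwise a value would be left with three visible occurrences and the weight would vanish.

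The concrete gap is species~(C). The factor $m_1^4 t(1+7\mu_2^2t+4\mu_2^4t^2)/(1-\mu_2^2t)^2$ is asserted, not derived; you yourself flag it as ``the main obstacle'' and defer the ``careful enumeration.'' This is the only genuinely delicate computation in the theorem, and it is precisely where the paper's method earns its keep: the split $\mathcal{S}^4_4 = \mathcal{S}^{4/1}_4 + \mathcal{S}^{4/2}_4$, the two separate covering/swapping bijections, and the factor-of-two overcounting correction in Proposition following \ref{Prop:S4/1} together produce $\frac{m_1^4t(1+2t)}{(1-t)^4}e^{t(\mu_4-3)} + \frac{6m_1^4t^2(1+t)}{(1-t)^5}e^{t(\mu_4-3)}$, whose sum is the quoted numerator. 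An anchored-cycle analysis of an M4 column is harder than your sketch suggests, because an M4 column can emit zero, one, or two chains of U22 columns depending on how its four hidden values coincide, and the two-M2 configuration requires yet another case analysis; none of this is in your proposal. So while the skeleton of your argument is the paper's and species~(A) and (B) are essentially right, the proof is incomplete until species~(C) is actually carried out, and I would recommend adopting the paper's differential-operator bookkeeping to do so.
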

\begin{corollary}
Furthermore, defining $d_0(c) = 2+c$, $d_1(c) = c(2+c)$ and $d_2(c) = c^3$,
\begin{equation*}
f_4(n) = n!^2 \sum _{w=0}^2 \sum _{s=0}^{4-2w} \sum _{c=0}^{n-s} \binom{4-2 w}{s}\frac{(1+c) m_1^{s+2 w} \mu_2^{2c-w} \mu _3^s \left(\mu _4-3\mu_2^2\right){}^{n-c-s}}{(n-c-s)!(2-w)! w!} d_w(c).
\end{equation*}
\end{corollary}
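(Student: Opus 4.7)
The plan is to extend the Nyquist--Rice--Riordan decomposition of the centered case to the marked permutation tables $\mathcal{T}^\times_4$, working in the framework of Proposition~\ref{prop:binomS} and the symbolic method of Flajolet and Sedgewick. First I would classify the column species of a nontrivial marked $4$-table: since $\mu_1=0$, the unmarked entries of each nontrivial column must fall into blocks of size at least two, so letting $u$ denote the number of unmarked entries gives five column species with respective weights $\mu_4$ (single-value $u=4$), $\mu_2^2$ (two-value $u=4$), $m_1\mu_3$ ($u=3$), $m_1^2\mu_2$ ($u=2$), and $m_1^4$ ($u=0$). Each column additionally carries a row-pattern recording which rows are marked and how the unmarked rows partition into value blocks.

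Next, I would identify the connected components of a table under the relation ``two columns share an unmarked value.'' The purely unmarked components recover the centered prefactor $\frac{e^{(\mu_4-3\mu_2^2)t}}{(1-\mu_2^2 t)^3}$ of the Nyquist--Rice--Riordan derivation, arising as a $\textsc{Set}$ of single-value $u=4$ columns together with a $\textsc{Set}$ of $3\,\textsc{Cyc}_{\geq 2}$ derangement cycles of two-value $u=4$ columns (three row-partition species per cycle). In the marked case the new components come in two flavours. A \emph{self-contained} marked column is one whose marks all encode the same value as the column itself and hence isolates the column from the rest of the table; it can be a $u=3$, a $u=2$, or a $u=0$ column. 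A \emph{chain-extended} component consists of a marked column whose marks encode a value different from the column's value, joined through a $\textsc{Seq}$ chain of ordinary two-value $u=4$ columns to a second marked column; these components involve either two $u=2$ columns linked by one chain, or a $u=0$ column linked by two chains to two cap columns (which may coincide). Each chain contributes a factor $\frac{1}{1-\mu_2^2 t}$.

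Assembling the $\textsc{Set}$ of all components and sorting by the total mark pattern gives, after the base prefactor, a sum of three contributions corresponding to the three bracketed terms. Contributions where every mark sits in its own $u=3$ column are independent over the four rows and give Term~1, $(1+m_1\mu_3 t)^4$. Contributions with a single $u=2$ column, at most two additional $u=3$ columns on the other rows, and an optional chain attachment give Term~2, $6 m_1^2\mu_2 t\,\frac{(1+m_1\mu_3 t)^2}{1-\mu_2^2 t}$, where $6=\binom{4}{2}$ counts the choice of rows sharing the $u=2$ column and $\frac{1}{1-\mu_2^2 t}$ is the chain. Contributions where all four marks are concentrated in a single $u=0$ column or in two $u=2$ columns (possibly chain-linked) give Term~3, $m_1^4 t\,\frac{1+7\mu_2^2 t+4\mu_2^4 t^2}{(1-\mu_2^2 t)^2}$. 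Summing the terms and multiplying by the base prefactor yields $F_4(t)$; the corollary follows by extracting $[t^n]F_4(t)$.

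The main obstacle is the enumeration underlying Term~3. Since a $u=0$ column has two pairs of matched mark-values each rooting a chain of ordinary $2$-columns, while a two-$u=2$ configuration has one chain between two marked columns, the row-pair assignments (three ways to split four rows into two pairs), the possibility that both chain-ends of a $u=0$ column terminate at the same $2$-column, and the three row-partition species of the background cycles all interact nontrivially. Tracking these carefully and verifying that their generating functions combine to produce the polynomial numerator $1+7\mu_2^2 t+4\mu_2^4 t^2$ together with the two-sequence denominator $(1-\mu_2^2 t)^{-2}$ is the delicate combinatorial step.
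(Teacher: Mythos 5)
Your plan is a genuine alternative to the paper's argument, and its first two thirds are sound: the column species and their weights are the same five the paper lists, the $\times^1$-columns (your $u=3$ columns) are indeed disjoint and factor out as $(1+m_1\mu_3 t)^4$ and $(1+m_1\mu_3 t)^2$, and the single-$\times^2$-column component really is a derangement cycle with one edge replaced by a marked column, giving $6m_1^2t/(1-\mu_2^2t)$ times the base, in agreement with $S^2_4(t)=6m_1^2te^{t(\mu_4-3)}/(1-t)^4$. But the proposal does not prove the statement, because the one step you defer --- the enumeration behind the numerator $1+7\mu_2^2t+4\mu_2^4t^2$ --- is precisely the hard part, and it cannot be waved through. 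Since $1+7t+4t^2$ is irreducible over $\mathbb{Q}$, it cannot arise from any single product of $\textsc{Seq}$/$\textsc{Set}$/$\textsc{Cyc}$ factors; it only appears as the sum of several sub-case generating functions. Concretely, with $\mu_2=1$ the four-mark contribution splits as $\frac{m_1^4t(1+2t)}{1-t}$ (a single $\times^4$-column whose hidden entries form either one quadruple or one pair-pair, the latter sitting inside a broken cycle) plus $\frac{6m_1^4t^2(1+t)}{(1-t)^2}$ (two $\times^2$-columns in complementary row pairs), and the latter itself decomposes into a piece $\frac{3m_1^4t^2}{(1-t)^2}$ where the two marked columns lie in \emph{independent} broken cycles --- a configuration your component taxonomy does not mention --- and a piece $\frac{3m_1^4t^2(1+2t)}{(1-t)^2}$ where they share one component. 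Verifying these counts requires tracking how the three row-partition species of a cycle interact with the choice of marked row pair, exactly the interaction you flag but do not resolve. As written, the proposal establishes the shape of $F_4(t)$ up to an unverified polynomial, so the corollary's coefficient formula (in particular the $w=2$ terms with $d_2(c)=c^3$) is not actually derived.

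It is worth noting that the paper avoids this enumeration entirely. Rather than classifying marked components, it constructs every table of $\mathcal{S}^2_4$, $\mathcal{S}^{4/1}_4$ and $\mathcal{S}^{4/2}_4$ from an unmarked table in $\mathcal{S}^0_4$ (or from $\mathcal{S}^{4/1}_4$) by covering a pair of equal entries, covering an entire column, or swapping the two marks of a $\times^4$-column with a pair elsewhere; counting the covering choices ($6c$ versus $2(n-c)$, and so on) turns each of $S^2_4$, $S^{4/1}_4$, $S^{4/2}_4$ into a first-order differential operator applied to the already-known $S^0_4(t)=e^{t(\mu_4-3)}/(1-t)^3$, and the numerator $1+7t+4t^2$ drops out of the algebra $(1+2t)(1-t)+6t(1+t)$. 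If you want to complete your component-based route, you would essentially be re-deriving these same sub-case generating functions by hand; the covering bijections are the shortcut that makes the delicate step routine.
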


\begin{proof}
Without the loss of generality, we set $\mu_2 = 1$ (the general case is obtained by the scaling property of determinants). Let $a,b$ denote different numbers selected from $[n]=\{1,2,3,\ldots,n\}$. Up to a permutation of rows, the only ways how the columns of $4$ by $n$ tables with nonzero weight could look like are as follows:
\vspace{-1em}
\begin{figure}[H]
\centering
\setlength{\tabcolsep}{5.2pt}
\begin{GrayBox}
\vspace{-0.6em}
\centering
\setlength{\tabcolsep}{5.2pt} \begin{tabular}{cccccc}
    Type: & $4$-column & $2$-column & $\times^1$-column & $\times^2$-column & $\times^4$-column \\[0.5em]
    $\mathcal{T}_4^\times :$ & \begin{tabular}{|>{\columncolor{mycolor}}c|}
    \hline
        $a$\\
        $a$\\
        $a$\\
        $a$\\
    \hline
    \end{tabular}
    &
        \begin{tabular}{|>{\columncolor{mycolor}}c|}
    \hline
        $a$\\
        $a$\\
        $b$\\
        $b$\\
    \hline
    \end{tabular}
    &
        \begin{tabular}{|>{\columncolor{mycolor}}c|}
    \hline
        $\times$\\
        $a$\\
        $a$\\
        $a$\\
    \hline
    \end{tabular}
    &
        \begin{tabular}{|>{\columncolor{mycolor}}c|}
    \hline
        $\times$\\
        $\times$\\
        $a$\\
        $a$\\
    \hline
    \end{tabular}
    &
        \begin{tabular}{|>{\columncolor{mycolor}}c|}
    \hline
        $\times$\\
        $\times$\\
        $\times$\\
        $\times$\\
    \hline
    \end{tabular}
    \\[1.9em]
    Weight $w_\times$: & $\mu_4$ & $1$ & $m_1\mu_3$ & $m_1^2$ & $m_1^4$
\end{tabular}
\vspace{-0.5em}
\end{GrayBox}
\vspace{-1em}
\caption{Structure of $\mathcal{T}^\times_4$ tables}
\end{figure}    
\vspace{-1em}

\noindent
The $\times^1$ columns contain a single element $a$, one instance of which is covered by $\times$, hence they are \textbf{disjoint} from the rest of a table. As a result, we can consider only tables $\mathcal{S}_4^\times \subset \mathcal{T}_4^\times$ which do not contain $\times^1$-columns. In any given table $\tau \in \mathcal{T}_4^\times$, there could be either four, two or no $\times^1$-columns. In terms of generating functions, this corresponds to
\begin{equation}\label{eq:F4expan}
F_4(t) = (1+ m_1\mu_3 t)^4S^0_4(t) + (1+ m_1\mu_3 t)^2S^2_4(t) + S^4_4(t),
\end{equation}
where $S^r_4(t)$ denotes the EGF of tables $\mathcal{S}^r_4 \subset \mathcal{S}^\times_4 $ with $r$ marks containing no $\times^1$ columns. It is convenient to let $\mathcal{S}^{r/s}_4\subseteq \mathcal{S}^r_4$ denote tables whose marks are distributed among exactly $s$ different columns. Since there is at most one mark per row, $\mathcal{S}^{r/s}_4$ tables contain only a few marked columns (see below). Structurally, $\mathcal{S}_4^\times = \mathcal{S}_4^0 + \mathcal{S}_4^2 + \mathcal{S}_4^4$ where $\mathcal{S}_4^4 = \mathcal{S}_4^{4/1} + \mathcal{S}_4^{4/2}$.
\vspace{-1.5em}
\begin{figure}[H]
\centering
\setlength{\tabcolsep}{5.2pt}
\begin{GrayBox}
\centering
\setlength{\tabcolsep}{5.2pt} \begin{tabular}{>{\centering\arraybackslash}m{8ex}
>{\centering\arraybackslash}m{12ex}
>{\centering\arraybackslash}m{12ex}
>{\centering\arraybackslash}m{12ex}
>{\centering\arraybackslash}m{12ex}}
    \begin{tabular}{c}
    \\[0.5em]
    $\mathcal{S}_4^\times :$ 
    \end{tabular} &
    \begin{tabular}{|>{\columncolor{mycolor}}c|}
    \hline
        \hspace{0.75em} \\
        \\
        \\
        \\
    \hline
    \end{tabular}
    &
    \begin{tabular}{|>{\columncolor{mycolor}}c|}
    \hline
        $\times$ \\
        $\times$ \\
        \\
        \\
    \hline
    \end{tabular}
    &
    \begin{tabular}{|>{\columncolor{mycolor}}c|}
    \hline
        $\times$ \\
        $\times$ \\
        $\times$ \\
        $\times$ \\
    \hline
    \end{tabular}
    &
    \begin{tabular}{|>{\columncolor{mycolor}}c|>{\columncolor{mycolor}}c|}
    \hline
        $\times$ & \\
        $\times$ & \\
        & $\times$\\
        & $\times$\\
    \hline
    \end{tabular}
    \\[3.4em]
    & $\mathcal{S}^0_4$ & $\mathcal{S}^2_4$ & $\mathcal{S}^{4/1}_4$ & $\mathcal{S}^{4/2}_4$
\end{tabular}
\end{GrayBox}
\vspace{-1em}
\caption{Structure of $\mathcal{S}^\times_4$ tables}
\label{fig:Sxtables}
\end{figure}    
\vspace{-1.5em}

\begin{proposition}[Nyquist, Rice and Riordan 1954 \cite{nyquist1954distribution}]\label{Prop:NRR}
\begin{equation*}
S_4^0(t) = F_4(t)|_{m_1=0,m_4 \to \mu_4} = \frac{e^{t(\mu_4-3)}}{(1-t)^3}
\end{equation*}
\end{proposition}
\begin{proof}
Since $\mu_4$ equals $m_4$ when $m_1 = 0$, $S_4^0(t)$ coincides with the expression for $F_4(t)|_{m_1=0}$ obtained by Nyquist, Rice and Riordan. In $\mathcal{S}_4^0$ tables, the $4$-columns are disjoint from the remaining $2$-columns. Furthermore, the 2-columns can be further divided into disjoint components. To a given table of 2-columns, we can associate a derangement $\pi$ whose cycles correspond to disjoint sub-tables into which this table decomposes. To make this association a bijection, there are $3$ ways how the remaining elements in a given sub-table can be arranged (see Figure \ref{tDtable}, each arrangement is represented by a vertical box with four slots filled with two dots representing in which rows the number in the first row appears). Since each row appears twice in any sub-table, the overall sign of those sub-tables is always positive.
\renewcommand{\arraystretch}{0.95}
\begin{figure}[H]
\centering
\begin{minipage}{.5\textwidth}
        \centering
    \begin{tabular}{|>{\columncolor{mycolor}}c|>{\columncolor{mycolor}}c|>{\columncolor{mycolor}}c|>{\columncolor{mycolor}}c|>{\columncolor{mycolor}}c|>{\columncolor{mycolor}}c|>{\columncolor{mycolor}}c|>{\columncolor{mycolor}}c|>{\columncolor{mycolor}}c|}
    \hhline{|==}
        1 & 3 \\ \hline
        1 & 3\\ 
        3 & 1\\ 
        3 & 1\\
    \hhline{|==}
    \end{tabular}
    \begin{tabular}{|>{\columncolor{mycolor}}c|>{\columncolor{mycolor}}c|>{\columncolor{mycolor}}c|}
    \hhline{===}
        2 & 6 & 7\\ \hline
        6 & 7 & 2\\ 
        2 & 6 & 7 \\ 
        6 & 7 & 2\\
    \hhline{===}
    \end{tabular}
    \begin{tabular}{|>{\columncolor{mycolor}}c|>{\columncolor{mycolor}}c|>{\columncolor{mycolor}}c|>{\columncolor{mycolor}}c|}
    \hhline{====|}
        4 & 5 & 8 & 9 \\ \hline
        9 & 4 & 5 & 8 \\ 
        9 & 4 & 5 & 8 \\ 
        4 & 5 & 8 & 9 \\
    \hhline{====|}
    \end{tabular}
    \end{minipage}%
\begin{minipage}{.5\textwidth}
        \centering
\begin{tikzpicture}[thick, main/.style = {draw,circle, inner sep = 2pt}, scale = 0.8]
\node[main] (1) at (-2.5,1) {$1$};
\node[main] (3) at (-2.5,-1) {$3$};
\node[main] (2) at (-1,0.8) {$2$};
\node[main] (6) at (1,0.8) {$6$}; 
\node[main] (7) at (0,-1) {$7$};
\node[main] (4) at (2,0) {$4$};
\node[main] (9) at (3,1) {$9$};
\node[main] (8) at (4,0) {$8$};
\node[main] (5) at (3,-1) {$5$};
\draw [fill=mycolor, draw=black] (-2.6,-0.4) rectangle (-2.4,0.4);
\draw[black,fill=black] (-2.5,0.3) circle (0.05);
\draw[black,fill=black] (-2.5,0.1) circle (0.05);
\draw [fill=mycolor, draw=black] (-0.1,-0.2) rectangle (0.1,0.6);
\draw[black,fill=black] (0,0.5) circle (0.05);
\draw[black,fill=black] (0,0.1) circle (0.05);
\draw [fill=mycolor, draw=black] (2.9,-0.4) rectangle (3.1,0.4);
\draw[black,fill=black] (3,0.3) circle (0.05);
\draw[black,fill=black] (3,-0.3) circle (0.05);
\draw[->] (1) to [bend left=40] (3);
\draw[->] (3) to [bend left=40] (1);
\draw[->] (2) to [bend left=40] (6);
\draw[->] (6) to [bend left=40] (7);
\draw[->] (7) to [bend left=40] (2);
\draw[->] (9) to [bend left=20] (8);
\draw[->] (8) to [bend left=20] (5);
\draw[->] (5) to [bend left=20] (4);
\draw[->] (4) to [bend left=20] (9);
\end{tikzpicture}
    \end{minipage}%
\caption{One-to-one correspondence between a table $\tau$ with nine $2$-columns decomposable into three disjoint sub-tables, and its associated derangement $\pi$ with cycles labeled according to the repetitions of the number in the first row of $\tau$.}
\label{tDtable}
\end{figure}
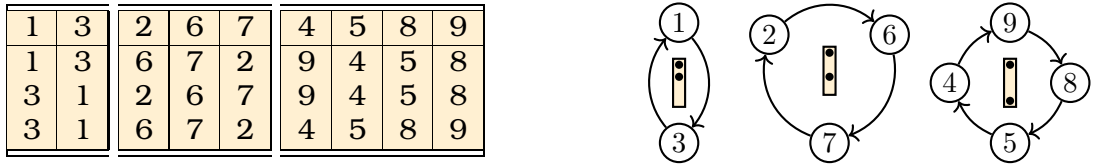

\noindent
Hence, structurally, $\mathcal{S}_4^0 = \textsc{Set}\big{(}\, m_4 \,\, \oneincirc \,\big{)} \star\, \textsc{Set}\big{(}+3\,\,\textsc{Cyc}_{\geq 2}\big{(} +\, \oneincirc \,\big{)}\big{)}$ and thus in terms of generating functions, $S_4^0(t) = \exp(\mu_4 t) \exp(-3t - \ln(1-t)) = \frac{e^{t(\mu_4-3)}}{(1-t)^3}$.
\end{proof}

\begin{proposition}
\begin{equation*}
S^2_4(t) = m_1^2(6-2\mu_4)\frac{\partial S^{0}_4(t)}{\partial \mu_4}+2m_1^2t\,\frac{\partial S^{0}_4(t)}{\partial t} = \frac{6 m_1^2 t e^{t \left(\mu_4-3\right)}}{(1-t)^4}
\end{equation*}
\end{proposition}

\begin{proof}
Let $\tau' \in \mathcal{S}^0_{4,n}$ have $c$ $4$-columns and thus $n-c$ $2$-columns. Its weight is $\mu_4^c$. From this $\tau'$, we create $\tau \in \mathcal{S}^2_{4,n}$ by covering one pair of identical elements by marks in either a $4$-column or a $2$-column. The contribution of $\tau'$ to $\sum_{\tau \in \mathcal{S}^{2}_{4,n}} w(\tau)\sgn\tau$ is then $$6cm_1^2\mu_4^{c-1}+2(n-c)m_1^2\mu_4^{c} = m_1^2(6-2\mu_4) \frac{\partial \mu_4^c}{\partial \mu_4} + 2m_1^2 n\mu_4^c.$$

\vspace{-1.5em}
\renewcommand{\arraystretch}{0.85}
\begin{table}[H]
\centering
\setlength{\tabcolsep}{5.2pt}
\begin{GrayBox}
\vspace{-0.3em}
\centering
\setlength{\tabcolsep}{5.2pt} \begin{tabular}{>{\centering\arraybackslash}m{20ex}
>{\centering\arraybackslash}m{20ex}
>{\centering\arraybackslash}m{20ex}}
    \begin{tabular}{c}
        \\[0.5em]
        $\mathcal{S}^2_{4,n} \leftarrow \mathcal{S}^0_{4,n}:$\\
    \end{tabular}
    &
    \begin{tabular}{|>{\columncolor{mycolor}}c|}
    \hline
        $\times$\\
        $\times$\\
        $a$\\
        $a$\\
    \hline
    \end{tabular}
$\leftarrow$
    \begin{tabular}{|>{\columncolor{mycolor}}c|>{\columncolor{mycolor}}c|}
    \hline
        \,$a$\,\\
        $a$\\
        $a$\\
        $a$\\
    \hline
    \end{tabular}
    &
    \begin{tabular}{|>{\columncolor{mycolor}}c|}
    \hline
        $\times$ \\
        $\times$ \\
        $b$ \\
        $b$ \\
    \hline
    \end{tabular}
$\leftarrow$
    \begin{tabular}{|>{\columncolor{mycolor}}c|>{\columncolor{mycolor}}c|}
    \hline
        \,$a$\,\\
        $a$\\
        $b$\\
        $b$\\
    \hline
    \end{tabular}
    \\[3.0em]
     & $6$ ways & $2$ ways
\end{tabular}
\vspace{-0.7em}
\end{GrayBox}
\end{table}    
\vspace{-1.5em}
\end{proof}

\begin{proposition}\label{Prop:S4/1}
\begin{equation*}
S^{4/1}_{4}(t) = m_1^4(1-\mu_4)\frac{\partial S^{0}_4(t)}{\partial \mu_4}+m_1^4 t\,\frac{\partial S^{0}_4(t)}{\partial t} = \frac{m_1^4t(1+2t)}{(1-t)^4} e^{t \left(\mu_4-3\right)}.
\end{equation*}
\end{proposition}

\begin{proof}
Let $\tau' \in \mathcal{S}^0_{4,n}$ have $c$ 4-columns and thus $n-c$ 2-columns, $w(\tau') = \mu_4^c$. From this $\tau'$, we create $\tau \in \mathcal{S}^{4/1}_{4,n}$ by covering an entire 4-column or 2-column. The contribution of $\tau'$ to $\sum_{\tau \in \mathcal{S}^{4/1}_{4,n}} w(\tau)\sgn\tau$ is $$cm_1^4\mu_4^{c-1}+(n-c)m_1^4\mu_4^c = m_1^4(1-\mu_4) \frac{\partial \mu_4^c}{\partial \mu_4} + m_1^4 n\mu_4^c.$$

\vspace{-1.5em}
\renewcommand{\arraystretch}{0.85}
\begin{table}[H]
\centering
\setlength{\tabcolsep}{5.2pt}
\begin{GrayBox}
\vspace{-0.3em}
\centering
\setlength{\tabcolsep}{5.2pt} \begin{tabular}{>{\centering\arraybackslash}m{20ex}
>{\centering\arraybackslash}m{20ex}
>{\centering\arraybackslash}m{20ex}}
    \begin{tabular}{c}
        \\[0em]
        $\mathcal{S}^{4/1}_{4,n} \leftarrow \mathcal{S}^0_{4,n}:$\\
    \end{tabular}
    &
    \begin{tabular}{|>{\columncolor{mycolor}}c|}
    \hline
        $\times$\\
        $\times$\\
        $\times$\\
        $\times$\\
    \hline
    \end{tabular}
$\leftarrow$
    \begin{tabular}{|>{\columncolor{mycolor}}c|>{\columncolor{mycolor}}c|}
    \hline
        \,$a$\,\\
        $a$\\
        $a$\\
        $a$\\
    \hline
    \end{tabular}
    &
    \begin{tabular}{|>{\columncolor{mycolor}}c|}
    \hline
        $\times$ \\
        $\times$ \\
        $\times$ \\
        $\times$ \\
    \hline
    \end{tabular}
$\leftarrow$
    \begin{tabular}{|>{\columncolor{mycolor}}c|>{\columncolor{mycolor}}c|}
    \hline
        \,$a$\,\\
        $a$\\
        $b$\\
        $b$\\
    \hline
    \end{tabular}
\end{tabular}
\vspace{-0.3em}
\end{GrayBox}
\end{table}    
\vspace{-1.5em}
\end{proof}

\begin{proposition}
\begin{equation*}
S^{4/2}_{4}(t) = (3-\mu_4)\frac{\partial S^{4/1}_4(t)}{\partial \mu_4}+t\,\frac{\partial S^{4/1}_4(t)}{\partial t} - S^{4/1}_4(t) = \frac{6m_1^4t^2(1+t)}{(1-t)^5} e^{t \left(\mu_4-3\right)}.
\end{equation*} 
\end{proposition}

\begin{proof}
Let $\tau' \in \mathcal{S}^{4/1}_{4,n}$ have $c$ 4-columns and thus $n-c-1$ 2-columns as now one column is a $\times^4$-column. The weight of $\tau'$ is $m_1^4\mu_4^c$. From this $\tau'$, we create $\tau \in \mathcal{S}^{4/2}_{4,n}$ by \textbf{swapping} its two $\times$ marks with a pair of numbers in a 4-column or 2-column. By symmetry, each table in $\mathcal{S}^{4/2}_{4,n}$ is counted twice. The contribution of $\tau'$ to $2\sum_{\tau \in \mathcal{S}^{4/2}_{4,n}} w(\tau)\sgn(\tau)$ is
\begin{equation*}
6cm_1^4\mu_4^{c-1}+2(n-c-1)m_1^4\mu_4^c = (6-2\mu_4) \frac{\partial (m_1^4\mu_4^c)}{\partial \mu_4} + 2n m_1^4\mu_4^c - 2m_1^4\mu_4^c
\end{equation*}

\vspace{-1.5em}
\renewcommand{\arraystretch}{0.85}
\begin{table}[H]
\centering
\setlength{\tabcolsep}{5.2pt}
\begin{GrayBox}
\vspace{-0.3em}
\centering
\setlength{\tabcolsep}{5.2pt} \begin{tabular}{>{\centering\arraybackslash}m{20ex}
>{\centering\arraybackslash}m{25ex}
>{\centering\arraybackslash}m{25ex}}
    \begin{tabular}{c}
        \\[0.5em]
        $2\mathcal{S}^{4/2}_{4,n} \leftarrow \mathcal{S}^{4/1}_{4,n}:$\\
    \end{tabular}
    &
   \begin{tabular}{|>{\columncolor{mycolor}}c|>{\columncolor{mycolor}}c|}
    \hline
        $\times$ & $a$\\
        $\times$ & $a$\\
        $a$ & $\times$\\
        $a$ & $\times$\\
    \hline
    \end{tabular}
$\leftarrow$
    \begin{tabular}{|>{\columncolor{mycolor}}c|>{\columncolor{mycolor}}c|}
    \hline
        $\times$ & \,$a$\,\\
        $\times$ & $a$\\
        $\times$ & $a$\\
        $\times$ & $a$\\
    \hline
    \end{tabular}
    &
    \begin{tabular}{|>{\columncolor{mycolor}}c|>{\columncolor{mycolor}}c|}
    \hline
        $\times$ & $a$\\
        $\times$ & $a$\\
        $b$ & $\times$\\
        $b$ & $\times$\\
    \hline
    \end{tabular}
$\leftarrow$
    \begin{tabular}{|>{\columncolor{mycolor}}c|>{\columncolor{mycolor}}c|}
    \hline
        $\times$ & \,$a$\,\\
        $\times$ & $a$\\
        $\times$ & $b$\\
        $\times$ & $b$\\
    \hline
    \end{tabular}
    \\[3.0em]
     & $6$ ways & $2$ ways
\end{tabular}
\vspace{-0.8em}
\end{GrayBox}
\end{table}    
\vspace{-1.5em}
\end{proof}

\begin{corollary}
$S^{4}_{4}(t) = S^{4/1}_{4}(t) + S^{4/2}_{4}(t) = \frac{m_1^4 t(1+7t+4t^2)}{(1-t)^5} e^{t \left(\mu_4-3\right)}$.
\end{corollary}
\begin{corollary}
$F_4(t) = \frac{e^{t(\mu_4-3)}}{\left(1-t\right)^3} \left(\left(1+m_1 \mu _3 t\right){}^4+6m_1^2 t\frac{\left(1+m_1 \mu _3t\right){}^2}{1-t}+m_1^4 t\frac{1+7t+4t^2}{\left(1-t\right){}^2}\right)$.
\end{corollary}
\end{proof}
\section{Sixth moment when $m_1 = 0$
}
The proof of the following theorem was already established in our paper \cite{LP2022}. In this section, we provide a more compact version of the proof based on inclusion/exclusion and the fact we know the EGF for the special case where $A_{ij}$ is normally distributed.

\begin{theorem}[Beck, Lv, Potechin 2023 \cite{LP2022}]\label{thm:F6cen}
For any distribution of $\Omega$ with $m_1=0$,
\begin{equation*}
    F_6(t)|_{m_1 = 0} = (1+m_3^2t)^{10}\,\frac{e^{t \left(m_6-15m_4m_2-10m_3^2 + 30m_2^3\right)}}{\left(1+3m_2^3t-m_4m_2t\right)^{15}} N_6\left(\frac{m_2^3t}{\left(1+3m_2^3t-m_4m_2t\right)^{3}}\right).
\end{equation*}
where $N_6(t) = \frac{1}{48}\sum_{n=0}^\infty (n+1)(n+2)(n+4)! \,t^n$ is the exponential generating function for the sixth moment of random matrices with i.i.d. Gaussian entries.
\end{theorem}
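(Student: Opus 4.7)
The plan is to extend the analytic combinatorics framework that produced Theorem~\ref{Thm:FourthMoment} to the case $k=6$. The first step is to classify the column types of $\mathcal{F}_6$ that have nonzero weight when $m_1 = 0$. A column contributes a nonzero factor precisely when every value appearing in it occurs at least twice among the six rows; for $k=6$ this forces one of the four partition shapes $\{6\}$, $\{4,2\}$, $\{3,3\}$, $\{2,2,2\}$, with column weights $m_6$, $m_4 m_2$, $m_3^2$, $m_2^3$ and with $1$, $15$, $10$, $15$ distinct row-refinements respectively. The latter three counts match exactly the exponents $15$, $10$, $15$ appearing in the stated formula.

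Second, I would analyse the global structure that these columns create. As in the proof of Proposition~\ref{Prop:NRR}, each connected component of a table corresponds to a cycle (or a collection of cycles) of the permutations restricted to the columns in that component. A $6$-column is an isolated atom of weight $m_6$; a sub-table built from $\{3,3\}$ columns of a fixed row-refinement closes up into a pair of permutations on its column set and splits off from the rest; and $\{4,2\}$ and $\{2,2,2\}$ columns interact to form cycles analogous to the $2$-cycles in the $k=4$ case but carrying richer combinatorial data. Before treating the general case, I would verify that if only $\{2,2,2\}$ columns are admitted and the Gaussian relations $m_4 = 3 m_2^2$, $m_6 = 15 m_2^3$ are imposed, the enumeration recovers the identity $F_6^{\mathrm{Gauss}}(t) = N_6(m_2^3 t)$, which fixes the normalisation of $N_6$ and plays the role of the $15$-type analogue of the $3$-fold cycle structure appearing in Proposition~\ref{Prop:NRR}.

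Third, I would apply inclusion-exclusion on top of the Gaussian identity to produce the general formula. The isolated $6$-columns compose into an exponential, but must be corrected to subtract the contribution already captured implicitly by the $\{2,2,2\}$ Gaussian structure; combined with analogous corrections for $\{3,3\}$ and $\{4,2\}$ columns, this produces the exponent $m_6 - 15 m_4 m_2 - 10 m_3^2 + 30 m_2^3$. The $\{3,3\}$ components, being disjoint from everything else, contribute the product $(1 + m_3^2 t)^{10}$, one factor per row-refinement. Inserting a $\{4,2\}$ column into a $\{2,2,2\}$ cycle is the direct analogue of the insertion arguments used in the propositions for $S_4^{4/1}$ and $S_4^{4/2}$ above: after summing over all insertion positions and all $15$ refinements, the net effect is to replace $N_6(m_2^3 t)$ by $N_6(m_2^3 t / (1 + 3 m_2^3 t - m_4 m_2 t)^3)$, with the extra prefactor $(1 + 3 m_2^3 t - m_4 m_2 t)^{-15}$ absorbing the normalisation, as in the theorem statement.

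The main obstacle is justifying the substitution inside $N_6$. Morally it reflects a Lagrange-inversion-style replacement of each $\{2,2,2\}$ column atom by a larger combinatorial object built from $\{2,2,2\}$ and $\{4,2\}$ columns: the cube in the denominator mirrors the three pairs in each $\{2,2,2\}$ refinement, while the $-m_4 m_2 t$ term records the excess weight of a $\{4,2\}$ column over the three $\{2,2,2\}$ configurations into which it can be \emph{resolved}. Making this substitution rigorous, and verifying that the $15$ refinements of $\{4,2\}$ interact consistently with the $15$ refinements of $\{2,2,2\}$ without over-counting cross terms, is the combinatorial heart of the argument and is substantially more delicate than the analogous step in the $k=4$ proof, since both refinement classes now contain $15$ partition types rather than the $3$ that appeared for $k=4$.
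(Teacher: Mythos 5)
Your outline tracks the paper's strategy---classify the nonzero column shapes, peel off the $\{6\}$ and $\{3,3\}$ contributions, anchor the remaining structure to the Gaussian generating function $N_6$, and account for $\{4,2\}$ columns by a substitution---but the step you yourself flag as ``the combinatorial heart'' is precisely the step you have not supplied, and it does not follow from the insertion arguments of the $k=4$ propositions you cite. Two concrete problems. First, your Gaussian anchor is misstated: restricting to $\{2,2,2\}$ columns alone does not give $N_6(m_2^3 t)$, since the Gaussian $F_6$ also receives contributions from $\{6\}$ and $\{4,2\}$ columns. The paper's anchor is different: each $4$-column weight is split as $m_4 = (m_4-3)+3$ and each $6$-column weight as $m_6=(m_6-15)+15$ (at $m_2=1$), where the ``$+3$'' and ``$+15$'' are realized combinatorially as a choice of perfect matching of the column's six entries into pairs. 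A column carrying such a matching (an \emph{unknown} column) has weight $1$ regardless of which of its paired values coincide, so the tables built from unknown columns only are weight-equivalent to the full set of Gaussian tables; that, by inclusion/exclusion, is what identifies their EGF with $N_6(t)$.

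Second, the substitution and the prefactor $(1-(m_4-3)t)^{-15}$ rest on a structural claim you never make: a known $4$-column with majority value $a$ forces the remaining pair of $a$'s into exactly one other column, so the known $4$-columns form a functional graph with out-degree $1$ and in-degree at most $1$ (a known $4$-column has a single free pair slot), and therefore decompose into disjoint cycles together with paths terminating at pair slots of unknown columns. The cycles contribute $\textsc{Set}\left(15\,\textsc{Cyc}_{\geq 2}\left((m_4-3)t\right)\right)=e^{-15(m_4-3)t}\left(1-(m_4-3)t\right)^{-15}$, which is where the $15$th power \emph{and} the $-15(m_4-3)$ in the exponent actually come from (not from a ``normalisation'' absorbed by the core). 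The paths contribute a factor $\textsc{Seq}\left((m_4-3)t\right)$ attached independently to each of the three pairs of every unknown column, which is exactly the substitution $t\mapsto t/(1-(m_4-3)t)^3$ inside $N_6$; this is plain composition of labeled structures, with no Lagrange inversion needed. Without the known/unknown split and the cycle/path dichotomy, your worry about the two families of fifteen refinements over-counting has no resolution, so as written the proposal stops short of a proof at its decisive step.
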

\begin{corollary}
Furthermore, defining $q_6 = m_6 - 10m_3^2- 15m_4m_2+ 30m_2^3$ and $q_4 = m_4m_2- 3m_2^3$,
\begin{equation*}
f_6(n)|_{m_1=0} = n!^2 \sum _{j=0}^n \sum _{i=0}^j \sum_{k=0}^{n-j} \frac{(1+i) (2+i) (4+i)! }{48 (n-j-k)!}\binom{10}{k}\binom{14+j+2i}{j-i} q_6^{n-j-k} q_4^{j-i}m_3^{2k} m_2^{3i}.
\end{equation*}
\end{corollary}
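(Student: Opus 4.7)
The plan is to parallel the $k = 4$ analysis of Section~3, using the permutation-table framework of Section~2 together with the Gaussian EGF $N_6(t)$ as an input and an inclusion-exclusion related to the set-partition lattice of $\{1,\ldots,6\}$. Since $m_1 = 0$ forces every surviving column in $\tau \in \mathcal{F}_{6,n}$ to have its row content be a partition of $\{1,\ldots,6\}$ into blocks of size at least $2$, exactly four column types remain: the $6$-column (weight $m_6$, one row-arrangement), the $4{+}2$-column (weight $m_4 m_2$, $\binom{6}{4} = 15$ row-arrangements), the $3{+}3$-column (weight $m_3^2$, $\tfrac{1}{2}\binom{6}{3} = 10$ row-arrangements), and the $2{+}2{+}2$-column (weight $m_2^3$, $\tfrac{6!}{2^3 \cdot 3!} = 15$ row-arrangements).

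Next, I would decompose each table into disjoint sub-tables in the spirit of Figure~\ref{tDtable}, where two columns belong to the same sub-table if and only if they share an element. Two structural observations drive the analysis: (i) $6$- and $3{+}3$-columns appear only as singleton sub-tables because their rigid block structure forces any shared element to repeat across all identified rows; and (ii) $2{+}2{+}2$- and $4{+}2$-columns concatenate into cyclic sub-tables with three independent ``slots'' per row-partition, directly paralleling the factor $3$ that appears in Proposition~\ref{Prop:NRR}. In the purely Gaussian case only $2{+}2{+}2$-columns contribute, and the combined EGF of their cyclic sub-tables is exactly $N_6(m_2^3 t)$; this is the input I take for granted.

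Finally, I would run an inclusion-exclusion on the partition lattice to rewrite the $4{+}2$ and $6$ column contributions in terms of corrected weights. The refinements $4{+}2 \rightsquigarrow 2{+}2{+}2$, and $6 \rightsquigarrow 4{+}2,\; 3{+}3,\; 2{+}2{+}2$, yield the polynomials $q_4 = m_4 m_2 - 3 m_2^3$ and $q_6 = m_6 - 15 m_4 m_2 - 10 m_3^2 + 30 m_2^3$, whose coefficients encode the relevant refinement multiplicities. Inserting $4{+}2$-columns into each of the three slots of a $2{+}2{+}2$-cycle substitutes $m_2^3 t \mapsto m_2^3 t / (1 - q_4 t)^3$ inside $N_6$; the remaining contributions are $(1 - q_4 t)^{-15}$ from standalone $4{+}2$ cycles across the $15$ row-arrangements, $(1 + m_3^2 t)^{10}$ from isolated $3{+}3$-columns across their $10$ row-arrangements, and $e^{t q_6}$ from isolated $6$-columns. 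The main obstacle is rigorously justifying the factor $(1 + m_3^2 t)^{10}$ rather than the naive $e^{10 m_3^2 t}$, which appears to require a careful sign-cancellation argument bounding the number of $3{+}3$-columns per row-arrangement, together with ensuring that the inclusion-exclusion distributes cleanly across the cyclic sub-table generating functions; once these are in place, the corollary follows by expanding $N_6$ as a formal power series in $t$ and collecting Taylor coefficients.
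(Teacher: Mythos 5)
Your overall architecture is the same as the paper's: inclusion--exclusion replacing raw column weights by corrected weights, the Gaussian EGF $N_6$ taken as a black box for the core, chains of corrected $4{+}2$-columns attached to the core via $t \mapsto t/(1-q_4 t)^3$, separate multiplicative factors for the remaining species, and finally Taylor expansion of the product to obtain the stated coefficient formula. The generating function you arrive at is the one in Theorem \ref{thm:F6cen}. However, two structural claims in the middle are wrong, and one of them is precisely the point you concede as your ``main obstacle,'' so the argument does not close as written. A $3{+}3$-column is \emph{not} a singleton sub-table: it contains a triple of $a$'s and a triple of $b$'s with $a\neq b$, each of these values must occur six times in the table (once per row), and no other admissible column type can host a bare triple, so the remaining three occurrences of $a$ must form the triple of \emph{another} $3{+}3$-column. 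The $3{+}3$-columns therefore decompose into cycles of length at least two; a cycle of length $\ell$ carries weight $(m_3^2)^\ell$, sign $(-1)^{\ell-1}$ (three of the six rows restrict to $\ell$-cycles on the participating column labels), and $10$ internal arrangements. This gives the structure $\textsc{Set}\big(-10\,\textsc{Cyc}_{\geq 2}(-m_3^2\,\cdot)\big)$ with EGF $(1+m_3^2 t)^{10}e^{-10 m_3^2 t}$ --- this, and not a sign cancellation among isolated columns, is where $(1+m_3^2t)^{10}$ comes from, and the accompanying $e^{-10m_3^2t}$ is what actually supplies the $-10m_3^2$ term of $q_6$ (in the paper the known $6$-column carries weight only $m_6-15m_2^3$, not your M\"obius-inverted $q_6$; the two bookkeepings agree only after the cycle corrections are included). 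Note that an isolated-column species can only contribute a term linear in $t$ to the logarithm of an EGF, so no multiplicity or sign assignment on single columns can ever produce the factor $(1+m_3^2t)^{10}$.

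The second problem is the identification of $N_6(m_2^3 t)$ with ``cyclic sub-tables of $2{+}2{+}2$-columns.'' If the Gaussian tables were sets of cycles with $15$ arrangements each, their EGF would grow like $(1-m_2^3t)^{-15}$, whereas $N_6(t)=\frac{1}{48}\sum_n (n+1)(n+2)(n+4)!\,t^n$ grows factorially; the class is far richer than a set of cycles, which is exactly why the Gaussian result must be imported as a black box rather than re-derived. The correct carrier of $N_6$ is the class of \emph{unknown} columns: columns equipped with a perfect matching of their six entries into three equal pairs whose three values need not be distinct, so that unknown columns absorb $4{+}2$- and $6$-columns with multiplicities $3$ and $15$ and make the corrected weights $q_4=m_4m_2-3m_2^3$ and $m_6-15m_2^3$ come out right. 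It is to the three pairs of each unknown column of this core that the chains of known $4{+}2$-columns attach, yielding the substitution $t\mapsto m_2^3t/(1-q_4t)^3$ inside $N_6$. With these two repairs your outline coincides with the paper's proof, and the corollary follows by extracting Taylor coefficients.
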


\begin{proof}
Without the loss of generality, we assume $m_2 = 1$ throughout the proof. The fact that $m_1 = 0$ reduces the number of tables with nontrivial weight. It is convenient to denote $\mathcal{F}^\mathrm{cen}_6$ as the set of those tables (where the column order is irrelevant), which in turn contribute to the sum $f_6(n)|_{m_1=0}$. These tables can be constructed out of the following columns (up to permutations  of the rows):

\vspace{-1em}
\bgroup
\renewcommand{\arraystretch}{0.7}
\begin{table}[H]
\centering
\setlength{\tabcolsep}{5.2pt}
\begin{GrayBox}
\vspace{-0.5em}
\centering
\begin{tabular}{ccccc}
    Type: & $6$-column & $4$-column & $2$-column & $3$-column \\[0.3em]
    $\mathcal{F}_6^{\mathrm{cen}}:$& \begin{tabular}{|>{\columncolor{mycolor}}c|}
    \hline
        $a$\\
        $a$\\
        $a$\\
        $a$\\
        $a$\\
        $a$\\
    \hline
    \end{tabular}
    &
        \begin{tabular}{|>{\columncolor{mycolor}}c|}
    \hline
        $a$\\
        $a$\\
        $a$\\
        $a$\\
        $b$\\
        $b$\\
    \hline
    \end{tabular}
    &
        \begin{tabular}{|>{\columncolor{mycolor}}c|}
    \hline
        $a$\\
        $a$\\
        $b$\\
        $b$\\
        $c$\\
        $c$\\
    \hline
    \end{tabular}
    &
        \begin{tabular}{|>{\columncolor{mycolor}}c|}
    \hline
        $a$\\
        $a$\\
        $a$\\
        $b$\\
        $b$\\
        $b$\\
    \hline
    \end{tabular}    \\[2.6em]
    Weight: & $m_6$ & $m_4$ & $1$ & $m_3^2$
\end{tabular}
\vspace{-0.6em}
\end{GrayBox}
\end{table}
\egroup
\vspace{-1em}

\noindent
When the entries of the matrix are $N(0,1)$, we have that $m_6 = 15$ and $m_4 = 3$. In this case, we can capture the weights by counting columns together with a pairing of equal elements in the column as there are $3$ such pairings for a 4-column and $15$ such pairings for a 6-column.

We handle general $m_4$ and $m_6$ using inclusion/exclusion. In particular, we pretend that each 4-column and 6-column is either \emph{known} or \emph{unknown} where the case when the column is unknown corresponds to having a pairing between elements in the column and the \emph{known} part captures the deviation of $m_4$ from $3$ and the deviation of $m_6$ from $15$. 

We can represent known and unknown columns visually as follows where the weights are carefully chosen to give a total of $m_4$ for 4-columns and a total of $m_6$ for 6-columns. Note that the elements $a'$, $b'$, and $c'$ in an \emph{unknown} column are \textbf{not necessarily distinct} but for a known 4-column, we require that $b' \neq a$ (if we allowed $b' = a$ then we would instead have a weight of $m_6 - 15m_4 + 30$ for known 6-columns).


\vspace{-1em}
\begin{table}[H]
\centering
\setlength{\tabcolsep}{5.2pt}
\begin{GrayBox}
\vspace{-0.7em}
\centering
\begin{tabular}{ccccc}
    Type: & \begin{tabular}{c} known\\[-0.2em] $6$-column \end{tabular} & \begin{tabular}{c} known\\[-0.2em] $4$-column \end{tabular} & $3$-column & \begin{tabular}{c} unknown\\[-0.2em] column \end{tabular}   \\[0.8em]
    $\mathcal{F}_6^*:$&         \begin{tikzpicture}[baseline = 9.2ex,scale = 0.45]
        \draw[fill=mycolor] (1.3-0.6, 0.5) rectangle (1*1.3+0.6, 6.5);
        \pairA[1.3];
        \draw[fill=gray!0] (1*1.3-0.35, 0.8) rectangle (1*1.3+0.35, 6.2);
        \node[vertex] (a) at (1*1.3,3.55) {$a$};
        \end{tikzpicture}
    &
        \begin{tikzpicture}[baseline = 9.2ex,scale = 0.45]
        \draw[fill=mycolor] (1.3-0.6, 0.5) rectangle (1*1.3+0.6, 6.5);
        \pairA[1.3];
        \draw[fill=gray!0] (1*1.3-0.35, 2.6) rectangle (1*1.3+0.35, 6.2);
        \node[vertex] (a) at (1*1.3,4.5) {$a$};
        \draw[pair] (A1.center) to node[fill=mycolor, inner sep=1.0pt] {$b'$} (A2.center);
        \end{tikzpicture}
    &
        \begin{tikzpicture}[baseline = 9.2ex,scale = 0.45]
        \draw[fill=mycolor] (1.2-.6, 0.5) rectangle (1*1.2+.6, 6.5);
        \node[vertex] (A1) at (1*1.2,1*0.93+0.3) {$b$};
        \node[vertex] (A2) at (1*1.2,2*0.93+0.3) {$b$};
        \node[vertex] (A3) at (1*1.2,3*0.93+0.3) {$b$};
        \node[vertex] (A4) at (1*1.2,4*0.93+0.3) {$a$};
        \node[vertex] (A5) at (1*1.2,5*0.93+0.3) {$a$};
        \node[vertex] (A6) at (1*1.2,6*0.93+0.3) {$a$};
        \end{tikzpicture}
    &
        \begin{tikzpicture}[baseline = 9.2ex,scale = 0.45]
        \draw[fill=mycolor] (1.3-0.6, 0.5) rectangle (1*1.3+0.6, 6.5);
        \pairA[1.3];
        \draw[pair] (A1.center) to [bend right=45] node[inner sep=1.0pt] {\,\,\,\,\,\,\,$b'$} (A5.center);
        \draw[pair] (A3.center) to [bend left=33] node[inner sep=1.0pt] {$a'$\,\,\,\,\,\,\,\,} (A6.center);
        \draw[pair] (A2.center) to [bend left=45] node[inner sep=1.0pt] {$c'$\,\,\,\,\,\,\,\,\,} (A4.center);
        \end{tikzpicture}
    \\[3.9em]
    Weight: & $m_6\!-\!15$ & $m_4\!-\!3$ & $m_3^2$ & $1$
\end{tabular}
\vspace{-0.5em}
\end{GrayBox}
\end{table}    
\vspace{-1em}

Figure \ref{fig:QuadriEx0} shows an example of a table $\tau \in \mathcal{F}^*_{6,9}$ with two known $4$-columns (each with weight $m_4-3$) and one known $6$-column. Note that since a $4$-column 
can either be known (weight $m_4-3$) or unknown (there are $3$ ways how we can pair up the four identical elements), the total contribution 
is $m_4 - 3 + 3 = m_4$, which matches the contribution of a $4$-column to $\mathcal{F}^\mathrm{cen}_6$. This decomposition is shown in Figure \ref{fig:exclinclF6}.

\begin{figure}[tbh!]
\centering
\begin{minipage}{.48\textwidth}
    \centering
        \begin{tikzpicture}[baseline = 8.2ex,scale = 0.50]
        \pairABCDEFGHI[1.3];
        \draw (1.3-1, 0.35) rectangle (9*1.3+1, 6.65);
        \draw[fill=mycolor,draw=none] (1.3-1, 0.5) rectangle (9*1.3+1, 6.5);
        \draw[fill=white] (2*1.3-0.3, 0.8) rectangle (2*1.3+0.3, 6.2);
        \node[vertex] (3) at (2*1.3,3.5) {3};
        \draw[fill=white] (3*1.3-0.3, 0.8) rectangle (3*1.3+0.3, 4.2);
        \node[vertex] (1) at (3*1.3,2.5) {1};
        \draw[fill=white] (7*1.3-0.3, 2.6) rectangle (7*1.3+0.3, 6.2);
        \node[vertex] (9) at (7*1.3,4.5) {9};
        \draw[pair] (A1.center) to [bend left=25] node[fill=mycolor, inner sep=1.0pt] {4} (A6.center);
        \draw[pair] (A2.center) to node[fill=mycolor, inner sep=1.0pt] {4} (A3.center);
        \draw[pair] (A4.center) to node[fill=mycolor, inner sep=1.0pt] {4} (A5.center);
        \draw[pair] (C5.center) to node[fill=mycolor, inner sep=1.0pt] {7} (C6.center);
        \draw[pair] (D1.center) to node[fill=mycolor, inner sep=1.0pt] {5} (D2.center);
        \draw[pair] (D3.center) to [bend left=45] node[fill=mycolor, inner sep=1.0pt] {2} (D6.center);
        \draw[pair] (D4.center) to node[fill=mycolor, inner sep=1.0pt] {2} (D5.center);
        \draw[pair] (E1.center) to node[fill=mycolor, inner sep=1.0pt] {6} (E2.center);
        \draw[pair] (E3.center) to [bend left=-45] node[fill=mycolor, inner sep=1.0pt] {5} (E6.center);
        \draw[pair] (E4.center) to node[fill=mycolor, inner sep=1.0pt] {8} (E5.center);
        \draw[pair] (F1.center) to [bend left=45] node[fill=mycolor, inner sep=1.0pt] {7} (F4.center);
        \draw[pair] (F2.center) to node[fill=mycolor, inner sep=1.0pt] {7} (F3.center);
        \draw[pair] (F5.center) to node[fill=mycolor, inner sep=1.0pt] {1} (F6.center);
        \draw[pair] (G1.center) to node[fill=mycolor, inner sep=1.0pt] {8} (G2.center);
        \draw[pair] (H1.center) to node[fill=mycolor, inner sep=1.0pt] {9} (H2.center);
        \draw[pair] (H3.center) to [bend left=45] node[fill=mycolor, inner sep=1.0pt] {6} (H6.center);
        \draw[pair] (H4.center) to node[fill=mycolor, inner sep=1.0pt] {5} (H5.center);
        \draw[pair] (I1.center) to node[fill=mycolor, inner sep=1.0pt] {2} (I2.center);
        \draw[pair] (I3.center) to [bend left=-45] node[fill=mycolor, inner sep=1.0pt] {8} (I6.center);
        \draw[pair] (I4.center) to node[fill=mycolor, inner sep=1.0pt] {6} (I5.center);
        \draw (1.3-1, 0.5) rectangle (9*1.3+1, 6.5);
        \end{tikzpicture}
\caption{A table $\tau \in \mathcal{F}^*_{6,9}$ with weight $w(\tau) = (m_6-15)(m_4-3)^2$.}
\label{fig:QuadriEx0}
    \end{minipage}%
\hfill
\begin{minipage}{.50\textwidth}
    \centering
\begin{tabular}{ccccccccc}
    \begin{tikzpicture}[baseline = 8.2ex,scale = 0.42]
        \draw[fill=mycolor] (1.2-.6, 0.5) rectangle (1 *1.2+.6, 6.5);
        \node[vertex] (A1) at (1*1.2,1) {$b$};
        \node[vertex] (A2) at (1*1.2,2) {$b$};
        \node[vertex] (A3) at (1*1.2,3) {$a$};
        \node[vertex] (A4) at (1*1.2,4) {$a$};
        \node[vertex] (A5) at (1*1.2,5) {$a$};
        \node[vertex] (A6) at (1*1.2,6) {$a$};
        \end{tikzpicture}
    & \!\!$=$\!\! &
        \begin{tikzpicture}[baseline = 8.2ex,scale = 0.42]
        \draw[fill=mycolor] (1.2-.6, 0.5) rectangle (1 *1.2+.6, 6.5);
        \pairA[1.2];
        \draw[fill=white] (1*1.2-0.3, 2.6) rectangle (1*1.2+0.3, 6.2);
        \node[vertex] (a) at (1*1.2,4.5) {$a$};
        \draw[pair] (A1.center) to node[fill=mycolor, inner sep=1.0pt] {$b$} (A2.center);
        \end{tikzpicture}
    & \!\!\!\!\!$+$\!\! &
        \begin{tikzpicture}[baseline = 8.2ex,scale = 0.42]
        \draw[fill=mycolor] (1.2-.5, 0.5) rectangle (1 *1.2+.5, 6.5);
        \pairA[1.2];
        \draw[pair] (A1.center) to node[fill=mycolor, inner sep=1.0pt] {$b$} (A2.center);
        \draw[pair] (A3.center) to node[fill=mycolor, inner sep=1.0pt] {$a$} (A4.center);
        \draw[pair] (A5.center) to node[fill=mycolor, inner sep=1.0pt] {$a$} (A6.center);
        \end{tikzpicture}
    & \!\!$+$\!\! &
        \begin{tikzpicture}[baseline = 8.2ex,scale = 0.42]
        \draw[fill=mycolor] (1.2-.7, 0.5) rectangle (1 *1.2+.5, 6.5);
        \pairA[1.2];
        \draw[pair] (A1.center) to node[fill=mycolor, inner sep=1.0pt] {$b$} (A2.center);
        \draw[pair] (A3.center) to [bend left=34] node[fill=mycolor, inner sep=1.0pt] {$a$} (A6.center);
        \draw[pair] (A4.center) to node[fill=mycolor, inner sep=1.0pt] {$a$} (A5.center);
        \draw (1.2-.7, 0.5) rectangle (1 *1.2+.5, 6.5);
        \end{tikzpicture}
    & \!\!$+$\!\! &
        \begin{tikzpicture}[baseline = 8.2ex,scale = 0.42]
        \draw[fill=mycolor,draw=none] (1.2-.7, 0.5) rectangle (1 *1.2+.7, 6.5);
        \pairA[1.2];
        \draw[pair] (A1.center) to node[fill=mycolor, inner sep=1.0pt] {$b$} (A2.center);
        \draw[pair] (A3.center) to [bend right=54] node[fill=mycolor, inner sep=1.0pt] {$a$} (A5.center);
        \draw[pair] (A4.center) to [bend left=54] node[fill=mycolor, inner sep=1.0pt] {$a$} (A6.center);
        \draw (1.2-.7, 0.5) rectangle (1 *1.2+.7, 6.5);
        \end{tikzpicture}
    \\[2.5em]
    $m_4$ & & $m_4\!-\!3$ & & $1$ &  & $1$ & & $1$
\end{tabular}
    \caption{Inclusion/exclusion for $4$-columns}
    \label{fig:exclinclF6}
    \end{minipage}%
\end{figure}
\vspace{-0.5em}

\FloatBarrier
\noindent
A similar analysis holds for the 6-columns. Overall, we obtain that 
\begin{equation}\label{Eq:f6matching}
    f_6(n)|_{m_1,m_3=0} = n! \sum_{\tau \in \mathcal{F}_6^\mathrm{cen}} w(\tau) \sgn(\tau) = n! \sum_{\tau \in \mathcal{F}_6^*} w(\tau) \sgn(\tau).
\end{equation}

\noindent
The key idea is that permutation tables $\mathcal{F}_6^*$ can be decomposed into several components. 
\begin{enumerate}
\item Known $6$-columns, composed as $\textsc{Set}\big{(}(m_6 - 15) \,\, \oneincirc \,\big{)}$ with EGF equal to
\begin{equation*}
e^{(m_6-15)t}.
\end{equation*}
\item Cycles of known $4$-columns, or $\textsc{Set}\big{(} 15\,\textsc{Cyc}_{\geq 2}\big{(}(m_4-3) \, \oneincirc \big{)}\big{)}$ with EGF
\begin{equation*}
\frac{e^{-15(m_4-3)t}}{(1-(m_4-3)t)^{15}}
\end{equation*}
\item Cycles of $3$-columns, or $\textsc{Set}\big{(}-10\,\textsc{Cyc}_{\geq 2}\big{(}-m_3^2 \,\, \oneincirc \big{)}\big{)}$ with EGF
\begin{equation*}
(1+ m_3^2t)^{10} e^{-10m_3^2t}
\end{equation*}
\item A ``core'' of unknown columns together with paths of known $4$-columns leading to pairs in the core. We can analyze this part as follows. Letting $\mathcal{N}_6$ be the structure for the Gaussian case, $\mathcal{N}_6$ is also the structure for the core without the attached paths of known $4$-columns with EGF being the known $N_6(t) = \frac{1}{48}\sum_{n=0}^\infty (n+1)(n+2)(n+4)! \,t^n$. We now observe that each column of the core has three paths (possibly of length $0$) of known $4$-columns leading to it. Structurally, this gives
\begin{equation}
    \mathcal{N}_6 \left(
    \begin{tikzpicture}[baseline = 6.2ex,scale = 0.41]
        \draw[fill=mycolor] (1.3-0.5, 0.5) rectangle (1*1.3+0.5, 5.5);
        \node[vertex] (A1) at (1*1.3,1) {};
        \node[vertex] (A2) at (1*1.3,1.7) {};
        \node[vertex] (A3) at (1*1.3,2.7) {};
        \node[vertex] (A4) at (1*1.3,3.3) {};
        \node[vertex] (A5) at (1*1.3,4.3) {};
        \node[vertex] (A6) at (1*1.3,5) {};
        \draw[pair] (A5.center) to (A6.center);
        \draw[pair] (A3.center) to (A4.center);
        \draw[pair] (A1.center) to (A2.center);
    \end{tikzpicture}
        \star \,
    \textsc{Seq}\left(\,\fourcolab\,\right)
    \star
    \textsc{Seq}\left(\,\fourcolabII\,\right)
    \star
    \textsc{Seq}\left(\,\fourcolabIII\,\right)
    \right).
\end{equation}
Since the EGF for $\textsc{Seq}\big{(}(m_4-3) \,\, \oneincirc \,\big{)}$ is $\frac{1}{1 - (m_4 - 3)t}$, the EGF for the core and the paths of known $4$-columns leading to it is $N_6\big{(}\frac{t}{(1-(m_4-3)t)^3}\big{)}$.
\end{enumerate}

\begin{figure}[H]
    \centering
    \begin{minipage}{.49\textwidth}
        \centering
\begin{tikzpicture}[baseline = 8.2ex,scale = 0.51]
        \draw (1*1.2+.6, 0.35) -- (1.2-.6, 0.35) -- (1.2-.6, 6.65) -- (1*1.2+.6, 6.65);
        \draw[fill=mycolor] (1.2-.6, 0.5) rectangle (1*1.2+.6, 6.5);
        \pairA[1.2];
        \draw[fill=gray!0] (1*1.2-0.3, 0.9) rectangle (1*1.2+0.3, 6.2);
        \node[vertex] (11) at (1*1.2,3.5) {11};
        \end{tikzpicture}
        \begin{tikzpicture}[baseline = 8.2ex,scale = 0.51]
        \draw (2*1.2+.6, 0.35) -- (1.2-.6, 0.35);
        \draw (1.2-.6, 6.65) -- (2*1.2+.6, 6.65);
        \draw[fill=mycolor] (1.2-.6, 0.5) rectangle (2*1.2+.6, 6.5);
        \pairAB[1.2];
        \draw[fill=gray!0] (1*1.2-0.3, 2.6) rectangle (1*1.2+0.3, 6.2);
        \node[vertex] (9) at (1*1.2,4.5) {9};
        \draw[fill=gray!0] (2*1.2-0.3, 2.6) rectangle (2*1.2+0.3, 6.2);
        \node[vertex] (10) at (2*1.2,4.5) {10};
        \draw[pair] (A1.center) to node[fill=mycolor, inner sep=1.0pt] {10} (A2.center);
        \draw[pair] (B1.center) to node[fill=mycolor, inner sep=1.0pt] {9} (B2.center);
        \end{tikzpicture}
        \begin{tikzpicture}[baseline = 8.2ex,scale = 0.51]
        \draw (1.2-.6, 0.35) -- (3*1.2+.6, 0.35);
        \draw (1.2-.6, 6.65) -- (3*1.2+.6, 6.65);
        \draw[fill=mycolor] (1.2-.6, 0.5) rectangle (3*1.2+.6, 6.5);
        \pairABC[1.2];
        \draw[fill=gray!0] (1*1.2-0.3, 0.8) rectangle (1*1.2+0.3, 4.2);
        \node[vertex] (2) at (1*1.2,2.5) {2};
        \draw[fill=gray!0] (2*1.2-0.3, 0.8) rectangle (2*1.2+0.3, 4.2);
        \node[vertex] (3) at (2*1.2,2.5) {3};
        \draw[fill=gray!0] (3*1.2-0.3, 0.8) rectangle (3*1.2+0.3, 4.2);
        \node[vertex] (7) at (3*1.2,2.5) {7};
        \draw[pair] (A5.center) to node[fill=mycolor, inner sep=1.0pt] {7} (A6.center);
        \draw[pair] (B5.center) to node[fill=mycolor, inner sep=1.0pt] {2} (B6.center);
        \draw[pair] (C5.center) to node[fill=mycolor, inner sep=1.0pt] {3} (C6.center);
        \end{tikzpicture}       
        \begin{tikzpicture}[baseline = 8.2ex,scale = 0.51]
        \draw (1.3-1, 6.65) -- (5*1.3+1, 6.65) -- (5*1.3+1, 0.35) -- (1.3-1, 0.35);
        \draw[fill=mycolor, draw=none] (1.3-1, 0.5) rectangle (5*1.3+1, 6.5);
        \draw[fill=black!10, draw=none] (3*1.3-0.7, 0.5) rectangle (4*1.3+0.7, 6.5);
        \pairABCDE[1.3];
        \draw[fill=gray!0] (1*1.3-0.2, 1.5) rectangle (1*1.3+0.4, 5.5);
        \node[vertex] (4) at (1*1.3+0.1,2.7) {4};
        \draw[fill=gray!0] (2*1.3-0.4, 1.5) rectangle (2*1.3+0.2, 5.5);
        \node[vertex] (6) at (2*1.3-0.1,2.7) {6};
        \draw[fill=gray!0] (5*1.3-0.3, 0.8) rectangle (5*1.3+0.3, 1.4);
        \draw[fill=gray!0] (5*1.3-0.3, 3.6) rectangle (5*1.3+0.3, 6.2);
        \node[vertex] (5) at (5*1.3,4.5) {5};
        \draw[pair] (A1.center) to [bend left=30] node[fill=mycolor, inner sep=1.0pt] {1} (A6.center);
        \draw[pair] (B1.center) to [bend right=30] node[fill=mycolor, inner sep=1.0pt] {4} (B6.center);
        \draw[pair] (C1.center) to [bend right=25] node[fill=black!10, inner sep=1.0pt] {6} (C6.center);
        \draw[pair] (C2.center) to node[fill=black!10, inner sep=1.0pt] {8} (C3.center);
        \draw[pair] (C4.center) to node[fill=black!10, inner sep=1.0pt] {1} (C5.center);
        \draw[pair] (D1.center) to [bend right=45] node[fill=black!10, inner sep=1.0pt] {8} (D4.center);
        \draw[pair] (D2.center) to node[fill=black!10, inner sep=1.0pt] {5} (D3.center);
        \draw[pair] (D5.center) to node[fill=black!10, inner sep=1.0pt] {8} (D6.center);
        \draw[pair] (E2.center) to node[fill=mycolor, inner sep=1.0pt] {1} (E3.center);
        \draw (1.3-1, 0.5) rectangle (5*1.3+1, 6.5);
        \end{tikzpicture}
    \end{minipage}%
    \hfill
    \begin{minipage}{.5\textwidth}
        \centering
        \begin{tikzpicture}[thick, main/.style = {draw,circle, inner sep = 2pt}, scale = 0.8]
        \node[main] (9) at (-4.2,1.4) {$9$};
        \node[main] (10) at (-4.2,-0.7) {$10$};
        \node[main] (2) at (-0.4,1.4) {$2$};
        \node[main] (3) at (-3,1.4) {$3$};
        \node[main] (7) at (-1.7,-0.7) {$7$};
        \node[main] (v) at (0.4,0) {$\nu$};
        \node[main] (6) at (1.7,0.7) {$6$};
        \node[main] (5) at (1.7,-0.7) {$5$};
        \node[main] (4) at (3,1.4) {$4$};
        \draw[->] (9) to [bend right=40] (10);
        \draw[->] (10) to [bend right=40] (9);
        \draw[->] (2) to [bend right=40] (3);
        \draw[->] (3) to [bend right=40] (7);
        \draw[->] (7) to [bend right=40] (2);
        \draw[->] (6) to [bend right=0] (v);
        \draw[->] (4) to [bend right=0] (6);
        \draw[->] (5) to [bend right=0] (v);
        \end{tikzpicture}
    \end{minipage}%
\caption{An $\mathcal{F}^*_{6,11}$ table with disjoint components (known 6-columns, cycles of known 4-columns, cycles of 6-columns and the core $\nu$ in gray with attached paths of known 4-columns) and the corresponding graph of dependencies of known $4$-columns}
\label{fig:graphF6star}
\end{figure}
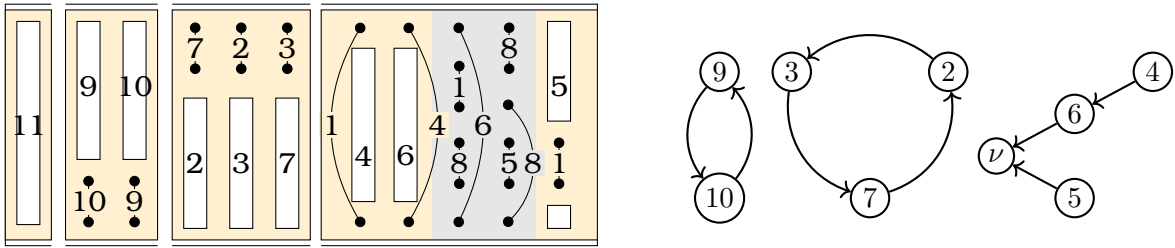

\noindent
By joining all disjoint components (see the example in Figure \ref{fig:graphF6star}), we get, in terms of EGF's,

\begin{equation*}
F_6(t)|_{m_1 = 0} \!=\! \underbrace{e^{(m_6-15)t}}_{\text{known $6$-columns}} \underbrace{\frac{e^{-15(m_4\!-3)t}}{(1-(m_4-3)t)^{15}}}_{\text{cycles of known $4$-columns}} \underbrace{(1\!+\! m_3^2t)^{10} e^{-10m_3^2t}}_{\text{cycles of $3$-columns}} \,\,N_6\!\!\!\!\underbrace{\left(\!\frac{t}{(1-(m_4-3)t)^3}\right)}_{\text{$4$-columns attached to the core}}
\end{equation*}

\noindent
The function $N_6(t)$ corresponds to the exponential generating function of tables containing only unknown columns. By inclusion/exclusion, $N_6(t)$ equals $F_6(t)$ assuming $m_1=m_3=m_5=0$ and $m_2=1,m_4=3,m_6=15$. Equivalently, $N_6(t)$ is the sixth moment of the determinant of a matrix whose entries are i.i.d. with the standard normal distribution. By the known result for the even moments of the determinant of such a matrix (Pr\'{e}kopa \cite{prekopa1967random}), we have that 
\begin{equation*}
    F_6(t)|_{X_{ij} \,\sim\, \mathsf{N}(0,1)} = N_6(t) = \frac{1}{48}\sum_{n=0}^\infty (n+1)(n+2)(n+4)! \,t^n.
\end{equation*}

\end{proof}

\printbibliography

\end{document}